\newtheorem{theorem}{Theorem}[section]
\newtheorem{lemma}[theorem]{Lemma}
\newtheorem{proposition}[theorem]{Proposition}
\newtheorem{remark}[theorem]{Remark}
\numberwithin{equation}{section}
\theoremstyle{definition}
\newtheorem{definition}[theorem]{Definition}
\theoremstyle{remark}
\newcommand{\R}{\mathbb{R}}
\newcommand{\N}{\mathbb{N}}
\newcommand{\C}{\mathbb{C}}
\newcommand{\rot}{R_{\frac{\pi}{2}}}
\newcommand{\matrici}{\mathbb{R}^{2 \times 2}}
\newcommand{\rotazioni}{SO(2)}
\def\dsp{\displaystyle}
\newcommand{\va}[1]{\left| #1 \right|}  
\newcommand{\nor}[1]{\left\| #1 \right\|} 
\newcommand{\holder}[1]{{\left[ #1 \right]}_{C^{\alpha} (\conj{\Omega})     }}
\newcommand{\conj}[1]{\overline{#1}}  
\DeclareMathOperator{\dist}{dist}
\DeclareMathOperator{\diag}{diag}
\DeclareMathOperator{\rank}{rank}
\DeclareMathOperator{\Div}{div}
\DeclareMathOperator{\spt}{spt}
\DeclareMathOperator{\tr}{Tr}
\newcommand{\Om}{\Omega}
\newcommand{\target}{T}
\newcommand{\uno}{T_1}  
\newcommand{\due}{T_2}  
\newcommand{\cK}{c_K}    
\newcommand{\step}{\mathcal{S}}
\newcommand{\Rmn}{\R^{m \times n}}
\newcommand{\measures}{\mathcal{M}(\matrici)}
\newcommand{\laminates}{\mathcal{L}(\matrici)}
\newcommand{\lam}{\mathcal{L}(\matrici)}
\newcommand{\Msdet}{ SL_{\rm sym}(2)}
 \title[Higher gradient integrability for two-phase elliptic equations]{Optimal lower exponent for the higher gradient integrability of solutions to two-phase elliptic equations in two dimensions}
 \author[S. Fanzon]
 {Silvio Fanzon}
 \address[Silvio Fanzon]{University of Sussex, Department of Mathematics, Pevensey 2 Building, Falmer Campus,
Brighton BN1 9QH, United Kingdom}
 \email{S.Fanzon@sussex.ac.uk}
\author[M. Palombaro]
 {Mariapia Palombaro}
 \address[Mariapia Palombaro]{University of Sussex, Department of Mathematics, Pevensey 2 Building, Falmer Campus,
Brighton BN1 9QH, United Kingdom}
 \email{M.Palombaro@sussex.ac.uk}
\begin{document}

\begin{abstract}
We study the higher gradient integrability of distributional solutions $u$ to the equation 
$\Div(\sigma \nabla u) = 0$ in dimension two, in the case when the essential range of $\sigma$ consists of only two elliptic matrices, 
i.e., $\sigma\in\{\sigma_1, \sigma_2\}$ a.e. in $\Om$.  
In \cite{npp}, for every pair of elliptic matrices $\sigma_1$ and $\sigma_2$, exponents $p_{\sigma_1,\sigma_2}\in(2,+\infty)$ and $q_{\sigma_1,\sigma_2}\in (1,2)$ 
have been characterised so that if $u\in W^{1,q_{\sigma_1,\sigma_2}}(\Om)$ is solution to the elliptic equation then  
$\nabla u\in L^{p_{\sigma_1,\sigma_2}}_{\rm weak}(\Om)$ and the optimality of the upper exponent $p_{\sigma_1,\sigma_2}$ has been proved.  
In this paper we complement the above result by proving the optimality of the lower exponent $q_{\sigma_1,\sigma_2}$. 
Precisely, we show that for every arbitrarily small $\delta$, one can find a particular microgeometry, i.e., an arrangement of the sets $\sigma^{-1}(\sigma_1)$ and $\sigma^{-1}(\sigma_2)$, 
for which there exists a solution $u$ to the corresponding elliptic equation such that $\nabla u \in L^{q_{\sigma_1,\sigma_2}-\delta}$, but $\nabla u \notin L^{q_{\sigma_1,\sigma_2}}$. The existence of such optimal microgeometries is achieved by convex integration methods, adapting the geometric constructions provided in \cite{afs} in the isotropic case to the present setting.

\vskip .3truecm \noindent Keywords: Beltrami equation, elliptic equations, gradient integrability.
\vskip.1truecm \noindent 2000 Mathematics Subject Classification:  30C62, 35B27. 

\end{abstract}

\maketitle
\tableofcontents

\section{Introduction}\label{introduction}
\noindent
Let $\Omega \subset \R^2$ be a bounded open domain and let $\sigma  \in L^{\infty} (\Omega; \matrici)$ be uniformly elliptic, i.e.,
$$
\sigma \xi \cdot \xi \geq \lambda |\xi|^{2} \text{ for every } \xi \in \R^2 \text{ and for a.e. } x \in \Omega,
$$ 
for some $\lambda >0$. 
We study the gradient integrability of distributional solutions $u\in W^{1,1}(\Om)$ to
\begin{equation} \label{the pde}
\Div (\sigma(x) \nabla u (x)) = 0   \quad \text{ in }\,\, \Omega,	
\end{equation}
in the case when the essential range of $\sigma$ consists of only two matrices, say $\sigma_1$ and $\sigma_2$.
It is well-known from Astala's work \cite{a} that there exist exponents $q$ and $p$, with $1<q<2<p$, such that 
if $u\in W^{1,q}(\Om ; \R)$ is solution to \eqref{the pde}, then $\nabla u\in L^{p}_{\rm weak}(\Om ;\R)$. 
In \cite{npp} the optimal exponents $p$ and $q$ have been characterised for every pair of elliptic matrices $\sigma_1$ and $\sigma_2$.
Denoting by $p_{\sigma_1,\sigma_2}$ and $q_{\sigma_1,\sigma_2}$ such exponents, whose precise formulas are recalled in Section \ref{formule},
 we summarise the result of \cite{npp} in 
the following theorem.
\begin{theorem}\cite[Theorem 1.4 and Proposition 4.2]{npp}\label{upper-optimality}
Let $\sigma_1, \sigma_2 \in \matrici$ be elliptic.
\begin{itemize}
\item[i)] If $\sigma\in L^{\infty}(\Om;\{\sigma_1,\sigma_2\})$ and $u\in  W^{1,q_{\sigma_1,\sigma_2}}(\Om)$ solves \eqref{the pde}, 
then $\nabla u\in L^{p_{\sigma_1,\sigma_2}}_{\rm weak}(\Om;\R^2)$.\\[-2mm]
\item[ii)] There exists $\bar\sigma\in L^{\infty}(\Om;\{\sigma_1,\sigma_2\})$ and a weak solution 
$\bar{u}\in W^{1,2}(\Om)$ to \eqref{the pde} with $\sigma=\bar\sigma$,  satisfying 
affine boundary conditions and such that $\nabla \bar{u}\notin L^{p_{\sigma_1,\sigma_2}}(\Om ;\R^2)$.
\end{itemize}
\end{theorem}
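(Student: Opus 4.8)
The plan is to transfer \eqref{the pde} into a Beltrami equation, apply the sharp higher-integrability theory for quasiregular maps to obtain part~i), and construct the extremal microgeometry of part~ii) by a staircase laminate realised through convex integration. For the reduction, observe that on each simply connected piece of $\Om$ the field $\sigma\nabla u$ is divergence free, so $\sigma\nabla u=\rot\nabla v$ for a stream function $v$; a direct computation then shows that $f:=u+iv$ solves a linear Beltrami system $f_{\bar z}=\mu(x)f_z+\nu(x)\overline{f_z}$, where $(\mu(x),\nu(x))$ depends only on $\sigma(x)$ and $|\mu|+|\nu|\le k<1$ with $k$ governed by the ellipticity of $\sigma_1,\sigma_2$. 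Since $\sigma\in\{\sigma_1,\sigma_2\}$ a.e., the pair $(\mu,\nu)$ takes only the two values $(\mu_i,\nu_i)$, each carrying its own quasiconformal distortion $K_i$, and the exponents recalled in Section~\ref{formule} are precisely the critical Sobolev exponents of the resulting effective distortion $K=K(\sigma_1,\sigma_2)>1$, namely $q_{\sigma_1,\sigma_2}=\tfrac{2K}{K+1}\in(1,2)$ and $p_{\sigma_1,\sigma_2}=\tfrac{2K}{K-1}\in(2,\infty)$.

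For part~i), let $u\in W^{1,q_{\sigma_1,\sigma_2}}(\Om)$ solve \eqref{the pde}, so $f\in W^{1,q_{\sigma_1,\sigma_2}}_{\rm loc}$ with $q_{\sigma_1,\sigma_2}=\tfrac{2K}{K+1}$ the critical Sobolev exponent for a $K$-Beltrami structure. By the critical-exponent theory for Beltrami equations (Astala--Iwaniec--Martin), $f$ is then automatically a genuine $K$-quasiregular map: $f\in W^{1,2}_{\rm loc}$ and it admits a Stoilow factorisation $f=h\circ\phi$ with $\phi$ principal $K$-quasiconformal and $h$ holomorphic. Astala's area-distortion theorem \cite{a}, in its sharp form, gives $\nabla\phi\in L^{\frac{2K}{K-1}}_{\rm weak,\,loc}$; since $h'\circ\phi$ is locally bounded, the same holds for $\nabla f$ and hence for $\nabla u$, and covering $\Om$ by finitely many balls with the quantitative estimate upgrades this to $\nabla u\in L^{p_{\sigma_1,\sigma_2}}_{\rm weak}(\Om;\R^2)$.

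For part~ii), following \cite{afs} one works with the pair $(u,v)$, $v$ the stream function, and attaches to each phase the $2$-plane $E_i\subset\matrici$ of admissible gradients $\nabla(u,v)$ for the $\sigma_i$-equation (equivalently, the matrices compatible with the Beltrami coefficient $(\mu_i,\nu_i)$); the two planes meet along rank-one segments which one writes down explicitly from $(\mu_i,\nu_i)$. One then builds a staircase laminate $\lambda$ of finite order supported on $E_1\cup E_2$, with barycenter a fixed matrix $A$ and tail $\lambda(\{|M|>t\})\sim t^{-p_{\sigma_1,\sigma_2}}$, by successive rank-one splittings that never leave the admissible set. The in-approximation lemma of M\"uller--\v{S}ver\'ak and Astala--Faraco--Sz\'ekelyhidi then produces a map $(\bar u,\bar v)\in W^{1,2}(\Om;\R^2)$, affine on $\partial\Om$, whose gradient has distribution $\lambda$; setting $\bar\sigma:=\sigma_i$ on $\{\nabla(\bar u,\bar v)\in E_i\}$ makes $\bar u$ a weak solution of \eqref{the pde} with $\sigma=\bar\sigma$ satisfying affine boundary conditions, and since $p_{\sigma_1,\sigma_2}>2$ one has $\bar u\in W^{1,2}(\Om)$, while the prescribed tail of $\lambda$ forces $\nabla\bar u\notin L^{p_{\sigma_1,\sigma_2}}(\Om;\R^2)$.

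The main obstacle is the laminate construction in part~ii): it must simultaneously stay supported on the two anisotropic wells $E_1,E_2$ dictated by $\sigma_1,\sigma_2$, split only along their genuine rank-one connections, carry exactly the critical tail exponent $p_{\sigma_1,\sigma_2}$ and no better, and admit the iterated in-approximation needed to be realised by an exact solution. In the isotropic case this reduces to an essentially one-parameter, explicit picture; for general elliptic $\sigma_1,\sigma_2$ the splitting must be organised in the conformal coordinates that diagonalise the two Beltrami operators, and that is where the real work lies. A secondary subtlety is that part~i) is an endpoint statement, so the sharp Astala estimate and the critical-exponent Beltrami theory must be available exactly at $q=\tfrac{2K}{K+1}$ and $p=\tfrac{2K}{K-1}$, not merely strictly above threshold.
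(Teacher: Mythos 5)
First, a structural remark: the paper does not prove Theorem \ref{upper-optimality} at all; it is quoted from \cite{npp} (Theorem 1.4 and Proposition 4.2), and Section \ref{formule} only recalls the reduction on which that proof rests. So your proposal is effectively measured against \cite{npp}, and your broad strategy --- stream function, Beltrami system, endpoint higher-integrability for part i), staircase laminate realised by convex integration for part ii) --- is indeed the strategy of \cite{npp} (and, for the laminate, of \cite{afs}).

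There is, however, a genuine gap in your part i). After passing from \eqref{the pde} to \eqref{beltrami} via \eqref{mu-nu(sigma)}, the distortion you actually control is the one coming from $\| |\mu|+|\nu| \|_{L^\infty}$ for the raw pair $(\mu_i,\nu_i)$, i.e.\ some $K'$ with $\| |\mu|+|\nu| \|_{L^\infty}\leq \frac{K'-1}{K'+1}$, and for a general pair $\sigma_1,\sigma_2$ this $K'$ is strictly larger than the optimal $K$ of \eqref{formula-K}. Applying \cite{a} and \cite{pv} at that stage only gives $\nabla u\in L^{2K'/(K'-1)}_{\rm weak}$ under the hypothesis $u\in W^{1,2K'/(K'+1)}$, which is both a weaker conclusion and a stronger hypothesis than the statement requires (since $q_{\sigma_1,\sigma_2}=\frac{2K}{K+1}\leq \frac{2K'}{K'+1}$). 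The missing step is the affine normalisation \eqref{trasformatedet}: one chooses $A,B\in\Msdet$, depending only on $\sigma_1,\sigma_2$, so that $\tilde f(x):=A^{-1}f(Bx)$ solves a new Beltrami equation whose two-phase coefficient has the normal form \eqref{speciali}, for which the ellipticity constant is exactly $\frac{K-1}{K+1}$ with $K$ given by \eqref{formula-K}; since gradient integrability is invariant under this affine change (while the Beltrami ellipticity constant is not), only then do \cite{a} and \cite{pv} yield $\nabla u\in L^{p_{\sigma_1,\sigma_2}}_{\rm weak}$ from $u\in W^{1,q_{\sigma_1,\sigma_2}}$. Your sentence asserting that the exponents of Section \ref{formule} ``are precisely the critical Sobolev exponents of the resulting effective distortion'' is exactly the nontrivial content that this normalisation and the computation leading to \eqref{formula-K} supply; as written, your argument proves the statement only with a suboptimal $K$. (Also, the borderline case $q=\frac{2K}{K+1}$ rests on \cite{pv}, not only on the Astala--Iwaniec--Martin theory.) Your part ii) sketch is consistent with \cite[Proposition 4.2]{npp}, with the caveat that the rank-one geometry of the two wells must likewise be set up after the same normalisation, which is precisely where the anisotropy --- and the real work --- enters.
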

Theorem \ref{upper-optimality} proves the optimality of the upper exponent $p_{\sigma_1,\sigma_2}$.
The objective of this paper is to complement this result by proving  the optimality of the lower exponent $q_{\sigma_1,\sigma_2}$.
As shown in \cite{npp} (and recalled in Section \ref{formule}), there is no loss of generality in assuming that 
\begin{equation}\label{speciali}
\sigma_1 = \diag (1/K,1/S_1),\quad \sigma_2 = \diag(K,S_2), 
\end{equation}
with 
\begin{equation} \label{ellipt real}
K>1  \qquad  \text{and}  \qquad \frac{1}{K} \leq S_j \leq K  \, , \quad j=1,2 \,.  
\end{equation}
Thus it suffices to show optimality for this class of coefficients, for which the exponents 
$p_{\sigma_1,\sigma_2}$ and $q_{\sigma_1,\sigma_2}$ read as
\begin{equation}\label{formula-pq}
 q_{\sigma_1,\sigma_2}= \frac{2K}{K+1}, \quad p_{\sigma_1,\sigma_2}= \frac{2K}{K-1}.
\end{equation}
Our main result is the following
%

\begin{theorem} \label{main theorem}
Let $\sigma_1,\sigma_2 $ be defined by \eqref{speciali} for some $K>1$ and $S_1, S_2 \in [1/K , K]$. 
 There exist 
coefficients $\sigma_n \in L^{\infty}(\Omega,\{ \sigma_1; \sigma_2 \})$, 
exponents $p_n \in \left[1,\frac{2K}{K+1} \right]$, functions $u_n \in W^{1,1} (\Omega;\R)$
such that 

\begin{align} 
\label{pde3}
&\begin{cases}
\Div (\sigma_n (x) \nabla u_n (x)) = 0  &  \text{ in } \quad \Omega \,,\\
u_n (x) = x_1							& \text{ on }  \quad \partial \Omega \,,
\end{cases}\\
&\nabla u_n \in L^{p_n}_{\rm weak}(\Omega;\R^2), \quad p_n \to \frac{2K}{K+1}, \\
&\nabla u_n \notin L^{\frac{2K}{K+1}}(\Omega;\R^2).
\end{align}
In particular $u_n \in W^{1,q} (\Omega;\R)$ for every $q < p_n$, but $\int_{\Omega} {|\nabla u_n|}^{\frac{2K}{K+1}} \, dx= \infty$.
\end{theorem}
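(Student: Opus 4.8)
The plan is to reduce the problem to a statement about gradient Young measures and solve it by convex integration, following the strategy of Astala--Faraco--Sz\'ekelyhidi \cite{afs} but carried out for anisotropic coefficients. First I would recall that a solution $u$ of \eqref{the pde} with $\sigma \in \{\sigma_1,\sigma_2\}$ can be rewritten, by introducing a stream function $v$ with $\sigma \nabla u = \rot \nabla v$, as a pair $f = (u,v)$ whose gradient $\nabla f$ takes values, on each phase, in an affine subspace of $\matrici$; equivalently, $\nabla f$ solves a Beltrami-type system $\partial_{\bar z} f = \mu_j \partial_z f + \nu_j \overline{\partial_z f}$ with coefficients $\mu_j,\nu_j$ determined by $\sigma_j$, and the ellipticity constants $S_1,S_2$ drop out of the distortion bound so that the relevant distortion is governed purely by $K$. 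This puts us in exactly the setting where the conjugate exponents $\tfrac{2K}{K+1}$ and $\tfrac{2K}{K-1}$ appear, matching \eqref{formula-pq}.

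Next I would set up the inclusion $\nabla f \in E_1 \cup E_2$ a.e., where $E_j$ are the two affine ``energy wells'' (two-dimensional affine manifolds in $\matrici$) coming from the two phases, and identify the relevant rank-one connections between $E_1$ and $E_2$ and the associated laminates of finite order. The key construction is a \emph{staircase laminate}: an infinite-order laminate $\nu = \sum_k \lambda_k \nu_k$ supported near the two wells, obtained by iterating rank-one moves, whose barycenter is $\mathrm{Id}$ (to match the boundary datum $u = x_1$) and which is designed so that the probability of reaching a matrix of norm $\sim t$ decays like $t^{-\frac{2K}{K+1}}$ up to logarithmic corrections --- precisely at the borderline so that the $\tfrac{2K}{K+1}$-moment diverges while every lower moment is finite. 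I would build this laminate by choosing, at each stage, a splitting along a rank-one direction that pushes a small mass outward along a conformal-type direction with a carefully tuned geometric ratio; the anisotropy $S_1,S_2 \ne 1$ changes the precise location of the wells and of the rank-one lines, so the geometric lemmas from \cite{afs} (existence of the requisite rank-one connections, the ``stadium''/truncation estimates) must be re-derived in this tilted geometry. Then I would invoke the by-now standard convex integration / in-approximation theorem (in the form used for gradient inclusions with unbounded support, e.g. via Baire category or the Faraco--Sz\'ekelyhidi stability argument) to produce, for each $n$, a genuine Lipschitz-on-compacts map $f_n$ with affine boundary values, with $\nabla f_n$ close to the truncated laminate $\nu^{(n)}$, hence with $\sigma_n := $ the induced two-phase coefficient and $u_n := $ first component solving \eqref{pde3}, satisfying $\nabla u_n \in L^{p_n}_{\rm weak}$ with $p_n \uparrow \tfrac{2K}{K+1}$ but $\int_\Omega |\nabla u_n|^{2K/(K+1)} = \infty$.

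Concretely the steps are: (1) the Beltrami reformulation and identification of the wells $E_1,E_2$ and their rank-one geometry in terms of $K$ (and the harmless dependence on $S_1,S_2$); (2) construction of the staircase laminate with barycenter $\mathrm{Id}$, critical decay exponent $\tfrac{2K}{K+1}$, and divergent critical moment, including the quantitative estimates on $\lambda_k$ and on $\|\nu_k\|$; (3) the convex integration step producing the maps $f_n$ and the verification of the boundary condition, the weak-$L^{p_n}$ bound, and the failure of $L^{2K/(K+1)}$ integrability; (4) translating back from $f_n$ to $(\sigma_n,u_n)$ and reading off $p_n \to \tfrac{2K}{K+1}$.

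The main obstacle I expect is step (2) combined with the geometric input it needs: getting the decay exponent of the laminate to land \emph{exactly} at $\tfrac{2K}{K+1}$ requires threading the rank-one construction so that the mass escaping to scale $t$ is $\asymp t^{-\frac{2K}{K+1}}$ with only a logarithmic slack --- enough slack that all exponents $<\tfrac{2K}{K+1}$ are integrable, but not enough to make $\tfrac{2K}{K+1}$ itself integrable --- and this borderline tuning must survive the passage through the anisotropic well geometry, where the admissible rank-one directions and the ``conformal'' escape direction are rotated and scaled relative to the isotropic case of \cite{afs}. Once the laminate is in hand, the convex integration machinery applies essentially verbatim, so the real work, and the place where the $[1/K,K]$-range of $S_1,S_2$ must be shown to be irrelevant, is in these explicit geometric computations.
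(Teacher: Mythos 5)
Your overall strategy (rewriting the equation as a differential inclusion for $f=(u,v)$, building a staircase laminate, and producing maps by convex integration) is indeed the paper's route, but two of your key steps would fail as written. First, you propose to obtain, for each $n$, a map $f_n$ whose gradient is close to a \emph{truncated} laminate $\nu^{(n)}$, and to read off $\int_\Omega|\nabla u_n|^{2K/(K+1)}\,dx=\infty$ from its first component. A map whose gradient stays near a finite-order (truncated) laminate is Lipschitz, so every moment of $\nabla u_n$ is finite; the divergence of the critical moment can only come from an \emph{infinite} iteration inside a single map. In the paper each approximate solution is the a.e.\ limit of infinitely many modifications on nested sets (Proposition \ref{prop:grad}), its distribution function is pinched between $c^{-1}t^{-2K/(K+1)}$ and $c\,t^{-p_{\delta_0}}$, and the index $n$ of Theorem \ref{main theorem} arises from letting an angular parameter $\delta_0\to 0$, not from truncating the laminate. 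Second, you assert that $S_1,S_2$ ``drop out'' and that the laminate can be tuned exactly to the borderline $\tfrac{2K}{K+1}$. This is precisely what fails in the anisotropic case: the staircase steps are the anti-conformal circles $\step_n=nJ\,\rotazioni$, and the decay rate of the laminate depends on the argument $\theta$ of the rotation in its barycenter, yielding an exponent $p(R_\theta)\in\left[\tfrac{2S}{S+1},\tfrac{2K}{K+1}\right]$ which equals $\tfrac{2K}{K+1}$ only at $\theta=0$ (Lemma \ref{lemma5}). One must therefore start the staircase at $J$ (not at the identity, which is conformal and does not lie on the staircase; the first row of $J$ still produces the boundary datum $x_1$) and keep all rotations generated along the iteration inside an arbitrarily small cone about $\theta=0$ (the $\delta$-bookkeeping of Lemma \ref{lemma3} and Proposition \ref{prop:grad}). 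This directional loss is why the weak-$L^p$ upper bound holds only for exponents strictly below $\tfrac{2K}{K+1}$ while the lower bound stays critical, i.e.\ exactly why the theorem is stated with $p_n\to\tfrac{2K}{K+1}$ instead of the critical weak exponent itself as in the isotropic case of \cite{afs}; the dependence on $S_1,S_2$ is anything but irrelevant.

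Two further points. You do not address the degenerate case $s=(s_1+s_2)/2=0$, i.e.\ $S_1S_2=1$: there the choice \eqref{scelta di a} and the whole staircase construction are not defined, and the paper treats it separately by reducing, via an affine change of variables, to the non-divergence-form result of \cite{afs}. Finally, your last step is stated as if convex integration directly yields exact solutions; that can be arranged via in-approximation, but the paper instead keeps an $L^\infty$ error $\gamma_n$ in the inclusion and then corrects it, setting $u_n=f_n^1+v_n$ with $v_n\in H^1_0(\Omega)$ solving $\Div(\sigma_n\nabla v_n)=-\Div(\sigma_n a_n-\rot^T b_n)$, which preserves both the boundary datum and the integrability properties; some such correction (or a genuine exact-inclusion scheme for unbounded target sets) must be supplied, since the divergence-form equation \eqref{pde3} is required to hold exactly.
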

Theorem \ref{main theorem} was proved in \cite{afs} in the case of isotropic coefficients, namely for $\sigma_1=\frac{1}{K} I$ and 
$\sigma_2= KI$. We follow the method developed in \cite{afs}, which relies on convex integration and provides an explicit construction 
of the sequence $u_n$. The adaptation of such method to the present context is definitely non-trivial due to the anisotropy of the coefficients. 
%

\section{Connection with the Beltrami equation and explicit formulas for the optimal exponents}
\label{formule}
For the reader's convenience we recall in this section how to reduce to the case \eqref{speciali} starting from any pair
$\sigma_1,\sigma_2 $. We will also give 
the explicit formulas for  $p_{\sigma_1,\sigma_2}$ and $q_{\sigma_1,\sigma_2}$.

It is well-known that a solution $u\in W^{1,q}_{loc}$, $q\geq 1$, to the elliptic equation \eqref{the pde} can be regarded as
the real part of a complex map  
$\dsp f:\Om\mapsto\C$ which is a $W^{1,q}_{loc}$  solution to a {\it Beltrami equation}. 
Precisely,  if $v$ is such that 
\begin{equation}\label{stream}
\rot^T \nabla  v =  \sigma \nabla u, \quad \rot := \left(\begin{array}{cc}
0&-1\\
1&0
\end{array}\right),
\end{equation} 
then $f:=u+iv$ solves the equation
\begin{equation}\label{beltrami}
 f_{\bar{z}}=\mu \, f_{z}+ \nu \, \overline{ f_{z}}\quad \text{a.e.  in }\Om\,,
 \end{equation}
 where the so called complex dilatations $\mu$ and $\nu$, both belonging to $L^{\infty}(\Om;\mathbb C)$, are given by 
 \begin{equation}\label{mu-nu(sigma)}
 \mu=\frac{\sigma_{22}-\sigma_{11}-i(\sigma_{12}+\sigma_{21})}{1+\tr \, \sigma+\det\sigma}\,,
 \quad
 \nu=\frac{1-\det\sigma+i(\sigma_{12}-\sigma_{21})}{1+\tr \, \sigma+\det\sigma}\,,
\end{equation}
\noindent 
and satisfy the ellipticity condition 
\begin{equation}\label{ellipticity-munu}
\| |\mu|+|\nu| \|_{L^\infty}< 1 \,.
\end{equation}
The ellipticity \eqref{ellipticity-munu}  is often expressed in a different form. Indeed, it implies that there exists 
$0\leq k<1$ such that $\| |\mu|+|\nu| \|_{L^\infty}\leq k< 1$ or equivalently that
\begin{equation}\label{ellipticity-munu-K}
\| |\mu|+|\nu| \|_{L^\infty}\leq \frac{K-1}{K+1} \,,
\end{equation}
for some $K>1$.
Let us recall that weak solutions to \eqref{beltrami}, \eqref{ellipticity-munu-K} are called $K$-quasiregular mappings. 
Furthermore, we can express $\sigma$ as a function of $\mu, \, \nu$ 
inverting the  algebraic system \eqref{mu-nu(sigma)},  
\begin{equation}\label{sigma(mu-nu)}
\sigma =
\left(
\begin{array}{ll}
\frac{|1-\mu|^2-|\nu|^2}{|1+\nu|^2-|\mu|^2} &  \frac{2\Im(\nu-\mu)}{|1+\nu|^2-|\mu|^2}\\
\, & \, \\
\frac{-2\Im(\nu+\mu)}{|1+\nu|^2-|\mu|^2} & \frac{|1+\mu|^2-|\nu|^2}{|1+\nu|^2-|\mu|^2}
\end{array}
\right)\,.
\end{equation}
Conversely, if $f$ solves \eqref{beltrami} with $\mu,\nu \in L^{\infty}(\Om,\mathbb C)$ satisfying 
\eqref{ellipticity-munu},
then its real part is solution to the elliptic equation \eqref{the pde} with $\sigma$ defined by \eqref{sigma(mu-nu)}.
Notice that $\nabla f$ and $\nabla u$ enjoy the same integrability properties.
Assume now that $\sigma:\Om\to\{\sigma_1,\sigma_2\}$ is a two-phase elliptic coefficient and $f$ is solution to \eqref{beltrami}-\eqref{mu-nu(sigma)}. 
Abusing notation, we identify $\Om$ with a subset of $\R^2$ and $f=u+iv$ with the real mapping $f=(u,v):\Om\to \R^2$. 
Then, as shown in \cite{npp},  
one can find matrices $A,B\in \Msdet$ (with $ \Msdet$ denoting the set of invertible matrices with determinant equal to one) depending only on 
$\sigma_1$ and $\sigma_2$, such that, setting 
\begin{equation}\label{trasformatedet}
\tilde f(x):= A^{-1} f(Bx), 
\end{equation}
one has that the function $\tilde f$   solves the new Beltrami equation 
\begin{equation*}
 \tilde f_{\bar{z}}=\tilde\mu \, f_{z}+ \tilde\nu \, \overline{ \tilde f_{z}} \quad \hbox{a.e.  in } B(\Om),
\end{equation*}
and the corresponding $\tilde\sigma: B(\Om)\to \{\tilde\sigma_1,\tilde\sigma_2\}$ defined by \eqref{sigma(mu-nu)} is of 
the form \eqref{speciali}:
\begin{equation*}
\tilde\sigma_1 = \diag (1/K,1/S_1),\quad \tilde\sigma_2 = \diag(K,S_2), \quad K>1, \quad S_1,S_2 \in [1/K ,K]\,.
\end{equation*}
The results in \cite{a} and \cite{pv} imply that if $\tilde f\in W^{1,q}$, with $q\geq \frac{2K}{K+1}$, then 
 $\nabla\tilde f\in L^{ \frac{2K}{K-1}}_{\rm weak}$; in particular,
$\tilde f\in W^{1,p}$ for each $p< \frac{2K}{K-1}$.
Clearly $\nabla\tilde f$ enjoys the same integrability properties as $\nabla f$ and $\nabla u$.

Finally, we recall the formula for $K$ which will yield the optimal exponents. 
Denote by 
$d_1$ and $d_2$  the determinant of the symmetric part of $\sigma_1$ and 
$\sigma_2 $ respectively, 
$$
d_i:=\det\Big(\frac{\sigma_i + \sigma_i^T}{2}\Big) \,, \quad i=1,2 \,,
$$
and by $(\sigma_i)_{jk}$ the $jk$-entry of $\sigma_i$.
Set
\begin{align*}
 m: &= \frac{1}{\sqrt{d_1 d_2}}\left[(\sigma_2)_{11}  (\sigma_1)_{22}  +
(\sigma_1)_{11}  (\sigma_2)_{22}   -
\frac{1}{2} 
\Big((\sigma_2)_{12}+(\sigma_2)_{21}\Big)
\Big((\sigma_1)_{12}+(\sigma_1)_{21}\Big) \right] \,, \\
 n: &=  \frac{1}{\sqrt{d_1 d_2}}
\left[
\det\sigma_1 + \det\sigma_2 -\frac{1}{2} 
\Big((\sigma_1)_{21} - (\sigma_1)_{12}\Big)
\Big((\sigma_2)_{21} - (\sigma_2)_{12}\Big)
\right] \,.
\end{align*}
Then 
\begin{equation}\label{formula-K}
K=
\left(\frac{m + \sqrt{m^2 - 4}}{2} \right)^{\frac{1}{2}}\left(\
\frac{n + \sqrt{n^2 -4}}{2}\right)^{\frac{1}{2}}.
\end{equation}
Thus,  for any pair 
of elliptic matrices $\sigma_1,\sigma_2 \in\matrici$, the explicit formula for the optimal exponents $p_{\sigma_1,\sigma_2}$ and $q_{\sigma_1,\sigma_2}$ 
are obtained 
by plugging \eqref{formula-K} into \eqref{formula-pq}.

\section{Preliminaries}

\subsection{Conformal coordinates}

For every real matrix $A \in \matrici$, 
\[
A= \left(\begin{matrix}
 	a_{11}  &  a_{12} \\
 	a_{21}  &  a_{22} \\
 \end{matrix} \right),
\]
we write $A=(a_+, a_-)$, where $a_+, a_- \in \C$ denote its conformal coordinates. By identifying any vector
$v=(x,y) \in \R^2$ with the complex number $v=x + i y $, conformal coordinates 
are defined by the identity
\begin{equation} \label{conf coord}
Av = a_+ v + a_- \conj{v} \,.
\end{equation}
Here $\conj{v}$ denotes the complex conjugation. 
From \eqref{conf coord} we have relations
\begin{equation} \label{real to conf}
a_+ = \frac{a_{11}+a_{22}}{2} + i \, \frac{a_{21}-a_{12}}{2}\,, 	\qquad \qquad
a_-  = \frac{a_{11}-a_{22}}{2} + i \, \frac{a_{21}+a_{12}}{2}\,,
\end{equation}
and, conversely,
\begin{equation} \label{conf to real}
\begin{aligned}
a_{11} & = \Re a_+ + \Re a_- \,,      \qquad \qquad   a_{12} = - \Im a_+ + \Im a_- \,,	 \\
a_{21} & = \Im a_+ + \Im a_- \,,       \qquad \qquad   a_{22} = \Re a_+ - \Re a_- 	\,.
\end{aligned}	
\end{equation}
Here $\Re z$ and $ \Im z$ denote the real and imaginary part of $z \in \C$ respectively.
We recall that 
\begin{equation} \label{multiplication}
AB= (a_+ b_+ + a_- \conj{b}_-, a_+ b_- + a_- \conj{b}_+)\,,
\end{equation}
and $\tr A = 2 \Re a_+$. Moreover 
\begin{equation} \label{formulae}
\begin{gathered}
\det (A)  = \va{a_+}^2 - \va{a_-}^2\,,	 \\
\va{A}^2  = 2 \va{a_+}^2 + 2 \va{a_-}^2 \,, \\
\nor{A}  = \va{a_+} + \va{a_-}\,,
\end{gathered}
\end{equation}
where $\va{A}$ and $\nor{A}$ denote the Hilbert-Schmidt and the operator norm, respectively. 

We also define the second complex dilatation of the map $A$ as
\begin{equation} \label{second dilatation}
\mu_A := \frac{a_-}{\conj{a}_+} 	\,,
\end{equation}
and the distortion 
\begin{equation} \label{distortion}
K(A) := \va{ \frac{1 + \va{\mu_A}}{1-\va{\mu_A}}   }= \frac{\nor{A}^2}{\va{\det (A)}} 	 \,.
\end{equation}
The last two quantities measure how far $A$ is from being conformal. 
Following the notation introduced in \cite{afs}, we define
\begin{equation} \label{conformal set}
E_{\Delta} := \{A= (a,\mu \, \conj{a}) \, \colon \,  a \in \C , \, \mu \in \Delta \}
\end{equation}
for a set $\Delta \subset \C \cup \{ \infty \}$; namely, $E_{\Delta}$ is the set of matrices with the second complex dilatation belonging to $\Delta$.
In particular $E_0$ and $E_{\infty}$ denote the set of conformal and anti-conformal matrices respectively. From \eqref{multiplication} we have that
$E_{\Delta}$ is invariant under precomposition by conformal matrices, that is
\begin{equation} \label{conformal invariance}
E_{\Delta} =E_{\Delta} A  \qquad \text{for every} \qquad A \in E_0 \smallsetminus \{0\} \,.
\end{equation}

\subsection{Convex integration tools}

We denote by $\measures$ the set of signed Radon measures on $\matrici$ having finite
mass. By the Riesz's representation theorem we can identify $\measures$
with the dual of the space $C_0 (\Rmn)$. Given $\nu \in \measures$ we define its 
\textit{barycenter} as
\[
\conj{\nu} := \int_{\matrici} A \, d\nu(A) \,.
\]  
We say that a map $f \in C(\conj{\Omega}; \R^2)$ is \textit{piecewise affine} if there exists a countable family of pairwise disjoint open subsets $\Omega_i \subset \Omega$ with $\va{\partial \Omega_i}=0$ and 
\[
\va{ \Omega \smallsetminus \bigcup_{i=1}^{\infty} \Omega_i  } =0  \,,
\]
such that $f$ is affine on each $\Omega_i$. Two matrices $A , B \in \matrici$
such that $\rank (B-A)=1$ are said to be \textit{rank-one connected} and the measure
$\lambda \delta_A + (1- \lambda) \delta_B \in \measures$ with $\lambda \in [0,1]$ is called a \textit{laminate of first order}.

\begin{definition}
The family of \textit{laminates of first order} $\laminates$ is the smallest family of 
probability measures in $\measures$	satisfying the following conditions: 
\begin{enumerate}[\indent (i)]
\item 	$\delta_A \in \laminates$ for every $A \in \matrici \,$;
\item assume that $\sum_{i=1}^N \lambda_i \delta_{A_i} \in \laminates$ and $A_1=\lambda B + (1-\lambda)C$ with $\lambda \in [0,1]$ and $\rank(B-C)=1$. Then the probability measure
\[
\lambda_1 (\lambda \delta_B + (1-\lambda ) \delta_C) + \sum_{i=2}^N \lambda_i \delta_{A_i} 
\]
is also contained in $\laminates$.
\end{enumerate}
\end{definition}
The process of obtaining new measures via (ii) is called \textit{splitting}.
The following proposition provides a fundamental tool to solve differential inclusions by means of convex integration (see e.g. \cite[Proposition 2.3]{afs} for a proof).

\begin{proposition} \label{gradienti}
Let $\nu = \sum_{i=1}^N \alpha_i \delta_{A_i} \in \laminates$ be a laminate of finite order with barycenter $\conj{\nu}=A$, that is $A= \sum_{i=1}^N \alpha_i A_i$ with $\sum_{i=1}^N \alpha_i=1$. Let $\Omega \subset \R^2$ be a bounded open set, $\alpha\in (0,1)$ and $0<\delta < \min \va{A_i-A_j}/2$. Then there exists a piecewise affine Lipschitz map $f \colon \Omega \to \R^2$ such that
\begin{enumerate}[\indent (i)]
\item $f(x)=Ax$ on $\partial \Omega$, \smallskip
\item $\holder{f-A} < \delta$ , \smallskip
\item $\va{ \{ x \in \Omega \, \colon \, \va{\nabla f (x) - A_i} < \delta \}  } = \alpha_i \va{\Omega}$, \smallskip
\item $\dist (\nabla f (x), \spt \nu) < \delta\,$ a.e. in $\Omega$. 	\smallskip
\end{enumerate} 
\end{proposition}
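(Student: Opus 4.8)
\emph{Strategy.} The plan is to argue by induction on the number of splitting operations in a derivation of the laminate $\nu$ from a Dirac mass. The base case $\nu=\delta_A$ is immediate, since $f(x):=Ax$ satisfies (i)--(iv) for any $\delta>0$. For the inductive step, write $\nu$ as obtained from a laminate $\mu=\sum_{i=1}^{M}\beta_i\delta_{B_i}\in\laminates$ — derived with one splitting fewer and, necessarily, with the same barycenter $A$ — by splitting $B_1=tP+(1-t)Q$ with $t\in(0,1)$ and $\rank(P-Q)=1$; thus $\nu=\beta_1\bigl(t\delta_P+(1-t)\delta_Q\bigr)+\sum_{i=2}^{M}\beta_i\delta_{B_i}$ and $\spt\nu=\{P,Q,B_2,\dots,B_M\}=\{A_1,\dots,A_N\}$ (trivial splittings with $t\in\{0,1\}$ may be discarded). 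I would then fix an auxiliary parameter $\delta_1$ with $0<\delta_1<\tfrac12\min\bigl(\delta,\min_{i\ne j}\va{B_i-B_j}\bigr)$, apply the inductive hypothesis to $\mu$ and $\Omega$ with parameter $\delta_1$ to get a piecewise affine Lipschitz $h\colon\Omega\to\R^2$ with $h=Ax$ on $\partial\Omega$, $\holder{h-A}<\delta_1$, $\va{\{\va{\nabla h-B_i}<\delta_1\}}=\beta_i\va{\Omega}$ for each $i$, and $\dist(\nabla h,\spt\mu)<\delta_1$ a.e., and finally refine $h$ only inside the ``phase $B_1$''.

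\emph{Refinement and gluing.} Since $h$ is piecewise affine, the set $U:=\{\va{\nabla h-B_1}<\delta_1\}$ is, up to a null set, a countable disjoint union of open pieces $U_\ell$ on each of which $h$ is affine with $\nabla h\equiv C_\ell$, $\va{C_\ell-B_1}<\delta_1$; moreover $\va U=\beta_1\va\Omega$. On each $U_\ell$ I would invoke the one-step lamination lemma below with the rank-one connected pair $P_\ell:=P+(C_\ell-B_1)$, $Q_\ell:=Q+(C_\ell-B_1)$ (so that $tP_\ell+(1-t)Q_\ell=C_\ell$), weight $t$, boundary datum $h|_{U_\ell}$, and accuracy $\eta_\ell\in(0,\delta-\delta_1)$ to be chosen; this produces a piecewise affine Lipschitz $f_\ell\colon U_\ell\to\R^2$ equal to $h$ on $\partial U_\ell$, with $\nabla f_\ell$ within $\eta_\ell$ of $\{P_\ell,Q_\ell\}$ a.e.\ and $\va{\{\va{\nabla f_\ell-P_\ell}<\eta_\ell\}}=t\va{U_\ell}$, $\va{\{\va{\nabla f_\ell-Q_\ell}<\eta_\ell\}}=(1-t)\va{U_\ell}$. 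Setting $f:=h$ on $\Omega\setminus U$ and $f:=f_\ell$ on each $U_\ell$ yields a continuous, piecewise affine, uniformly Lipschitz map with $f=Ax$ on $\partial\Omega$, which is (i). Outside $U$ one has $\nabla f=\nabla h$, within $\delta_1$ of $\{B_2,\dots,B_M\}$; inside $U$, $\nabla f$ is within $\eta_\ell+\delta_1<\delta$ of $\{P,Q\}$, and the phases $P$ and $Q$ have total measures $t\va U=t\beta_1\va\Omega$ and $(1-t)\va U$. Because $\delta<\tfrac12\min_{i\ne j}\va{A_i-A_j}$ the $\delta$-balls around the $A_i$ are pairwise disjoint and, using $\delta_1<\delta$, no phase leaks into a neighbouring one; with the measure identities this gives (iii), and (iv) follows since (iii) forces $\nabla f\in\bigcup_iB_\delta(A_i)$ a.e. Finally, (ii) comes from $\holder{f-A}\le\holder{h-A}+\holder{f-h}$: the perturbation $w:=f-h$ vanishes outside $\overline U$ and on each $\partial U_\ell$, $\nor w_{L^\infty}=\sup_\ell\nor{f_\ell-h}_{L^\infty(U_\ell)}$ can be made arbitrarily small by shrinking the $\eta_\ell$ while $\mathrm{Lip}(w)$ stays bounded in terms of $\nu$ and $\delta$ only, so a standard interpolation/gluing estimate bounds $\holder{w}$ by $\delta-\delta_1$.

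\emph{The one-step lemma.} It remains to supply the building block: given $P,Q\in\matrici$ with $\rank(P-Q)=1$, $t\in(0,1)$, a bounded open $U\subset\R^2$, an affine map $L(x)=Mx+c$ with $M=tP+(1-t)Q$, and $\eta>0$, there exists a piecewise affine map $g\colon U\to\R^2$, Lipschitz with constant depending only on $P,Q,t$, such that $g=L$ on $\partial U$, $\nor{g-L}_{L^\infty(U)}<\eta$, $\dist(\nabla g,\{P,Q\})<\eta$ a.e., and $\va{\{\va{\nabla g-P}<\eta\}}=t\va U$, $\va{\{\va{\nabla g-Q}<\eta\}}=(1-t)\va U$. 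Writing $P-Q=a\otimes n$, the construction begins from the $n$-direction sawtooth $\psi$ (continuous, piecewise linear, $\psi'\in\{1-t,-t\}$, slope $1-t$ on a fraction $t$ of each short period, zero mean), for which $x\mapsto Mx+c+\psi(x\cdot n)a$ has gradient in $\{P,Q\}$ with exact proportions and equals $L$ on the hyperplanes orthogonal to $n$ through the period nodes; to match $L$ on all of $\partial U$ one exhausts $U$ up to a null set by countably many disjoint open rectangles having two sides orthogonal to $n$, and on each combines the sawtooth with a cut-off in the $n^\perp$-direction, which creates thin transition layers — of arbitrarily small total measure — where $\nabla g$ is within $\eta$ of the segment $[P,Q]$. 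The exact proportions are then restored by iterating the same construction on these layers (on each of which $g$ is affine with gradient of the form ``a point of $[P,Q]$ plus a multiple of $a\otimes n^\perp$'', hence again the barycenter, with weight in $[0,1]$, of a rank-one pair in direction $n$ close to $P,Q$), letting the successive defect measures decay geometrically to $0$; the limiting identity $\va{\{\va{\nabla g-P}<\eta\}}=t\va U$ then follows from the conservation $\int_U\ell(\nabla g)\,dx=\ell(M)\va U=t\va U$, valid at every stage because the boundary datum is fixed, for the linear functional $\ell$ on $\matrici$ with $\ell(P)=1$, $\ell(Q)=0$, $\ell(a\otimes n^\perp)=0$ — which evaluates to $1$ on every ``$P$-type'' and to $0$ on every ``$Q$-type'' affine piece produced along the iteration.

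\emph{Main obstacle.} The genuinely delicate point is the one-step lemma — realising the target proportions \emph{exactly} while keeping the prescribed affine datum on all of $\partial U$ and the gradient $\eta$-close to $\{P,Q\}$; everything else (the induction, the gluing, the $C^\alpha$ interpolation, and the bookkeeping with the ancillary parameters $\delta_1,\eta_\ell$) is routine once that lemma is available. Within the proof of the lemma the one subtlety that calls for care is the choice of the conservation functional $\ell$, since in degenerate configurations (e.g.\ when $P$ or $Q$ is itself rank-one) one must verify that a functional with $\ell(P)=1$, $\ell(Q)=0$, $\ell(a\otimes n^\perp)=0$ can still be found, or otherwise keep the geometric parameters in the transition layers tuned so that the residual defect in the proportions can be absorbed; but this does not affect the overall scheme.
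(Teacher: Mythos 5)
The paper does not prove this proposition at all: it is quoted verbatim from the literature (the text points to \cite[Proposition 2.3]{afs} for the proof), and your argument --- induction on the number of splittings, refinement of the $B_1$-phase via a one-step rank-one lamination lemma with exact volume fractions and affine boundary data, gluing, and the $L^\infty$/Lipschitz interpolation for the $C^\alpha$ bound --- is precisely the standard proof given there. You have also correctly identified the only genuinely delicate point (realising the proportions exactly in the one-step lemma, including the degenerate case where the conservation functional $\ell$ may fail to exist), so the proposal is sound and matches the cited argument.
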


\subsection{Weak $L^p$ spaces}

We recall the definition of weak $L^{p}$ spaces. Let 
$f \colon \Omega \to \R^2$ be a Lebesgue measurable function. 
Define the distribution function of $f$ as 
\[
\lambda_f \colon (0,\infty) \to [0,\infty] \quad \text{with} \quad \lambda_f (t) := \va{ \{  x \in \Omega \, \colon \, |f(x)|>t    \}    } \,.
\]
Let $1\leq p<\infty$, then the following formula holds
\begin{equation} \label{cavalieri}
\int_{\Omega} \va{f(x)}^p \, dx = p \int_0^\infty t^{p-1} \lambda_f (t) \, dt  \,.
\end{equation}
Define the quantity
\[
[f]_p :={ \left(   \sup_{t>0} \, t^p \lambda_f (t)  \right)  }^{1/p}   
\]
and the weak $L^p$ space as
\[
L^p_{\rm weak} (\Omega; \R^2) := \left\{ f \colon \Omega \to \R^2 \, \colon \, f \, \text{ measurable}, \, [f]_p <\infty \right\} \,.
\]
$L^p_{\rm weak}$ is a topological vector space and by Chebyshev's inequality we have
$[f]_p \leq \nor{f}_{L^p}$. In particular this implies 
$L^p \subset L^p_{\rm weak}$.

\section{Proof of Theorem \ref{main theorem}}

For the rest of this paper, $\sigma_1$ and $\sigma_2$ are as in \eqref{speciali}-\eqref{ellipt real}. 
We start by rewriting \eqref{the pde} as a differential inclusion. To this end, define the sets
\begin{equation} \label{real targets}
\uno := \left\{                                    
\left(
\begin{matrix}
x   &   -y  \\
S_1^{-1}\, y   &      K^{-1}\,x	
\end{matrix}
\right) \, \colon \, x,y \in \R 
\right\} \,,
\qquad	
\due := \left\{                                    
\left(
\begin{matrix}
x   &   -y  \\
S_2 \, y   &     K \, x	
\end{matrix}
\right) \, \colon \, x,y \in \R 
\right\}	 \,.
\end{equation} 
Let $\sigma \in L^{\infty}(\Omega;\{\sigma_1,\sigma_2\})$. 
It is easy to check  (see for example \cite[Lemma 3.2]{afs}) that $u$ solves \eqref{the pde} if and only if $f$ solves the differential inclusion
\begin{equation} \label{differential inclusion}
\nabla f (x) \in \uno \cup \due  \quad \text{a.e. in } \,\, \Omega \,,
\end{equation}
where $f:=(u,v)$ and $v$ is the stream function of $u$, which is defined, up to an addictive constant, by \eqref{stream}.

In order to solve the differential inclusion \eqref{differential inclusion},
it is convenient to use \eqref{real to conf} and write our target sets in conformal coordinates:
\begin{equation} \label{conf targets}
\uno = \left\{                                    
\left(
a, d_1 (\conj{a})
\right) \, \colon \, a \in \C 
\right\},
\qquad	
\due = \left\{                                    
\left(a, -d_2 (\conj{a})
\right) \, \colon \, a \in \C 
\right\}	,
\end{equation}
where the operators $d_j \colon \C \to \C$ are defined as
\begin{equation} \label{k s}
d_j (a) := k \, \Re a + i \, s_j \, \Im a \,,  \quad \text{with} \quad k:= \frac{K-1}{K+1}  \quad \text{and}  \quad  s_j := \frac{S_j-1}{S_j+1} \,.
\end{equation}
Conditions \eqref{ellipt real} imply
\begin{equation} \label{ellipt conf}
0<k<1  \quad  \text{and}  \quad -k \leq s_j \leq k  \quad \text{for} \quad j=1,2 \,.   
\end{equation}
Introduce the quantities 
\begin{gather} 
s:=\frac{s_1+s_2}{2} = \frac{S_1 S_2 -1}{(1+S_1)(1+S_2)} \label{little s} \\
S := \frac{1+s}{1-s} = \frac{S_1 + S_2 + 2 S_1 S_2}{2 + S_1 + S_2} \label{def S}	\,.
\end{gather}
By \eqref{ellipt conf} we have
\begin{equation} \label{average bounds}
-k \leq s \leq k    \qquad \text{and} \qquad \frac{1}{K} \leq S \leq K \,.   	
\end{equation}
We distinguish three cases.

\smallskip
\paragraph{\indent \textit{1. Case $s > 0$}  (corresponding to $S>1$).} We study this case in Section \ref{sec:convex}, where 
we generalise the methods used in \cite[Section 3.2]{afs}. 
Observe that this case includes the one studied in \cite{afs}. Indeed, 
for $s=k$ one has that $s_1=s_2=k$ and the target sets \eqref{conf targets} become
\[
\uno = E_k= \left\{                                    
\left(
a, k \conj{a}
\right) \, \colon \, a \in \C 
\right\},
\qquad	
\due = E_{-k}= \left\{                                    
\left(a, -k \conj{a}
\right) \, \colon \, a \in \C 
\right\}	,
\]
where $E_{\pm k}$ are defined in \eqref{conformal set}.
We remark that, in this particular case, the construction provided in Section 5 coincides with the one given in \cite[Section 3.2]{afs}.

\smallskip
\paragraph{\indent \textit{2. Case $s < 0$}  (corresponding to $S<1$).} This case can be reduced to the previous one. 
Indeed, if we introduce $\hat{s}_j:= - s_j$, $\hat{s}:=(\hat{s}_1+ \hat{s}_2 )/2 > 0$ and the operators $\hat{d}_j (a) := k \, \Re a + i \, \hat{s}_j \, \Im a$ then the target sets \eqref{conf targets} read as 
\[
\uno = \{                                    
(
a, \hat{d}_1 (a)  ) \, \colon \, a \in \C 
\},
\qquad	
\due = \{                                    
(a, -\hat{d}_2 (a)   ) \, \colon \, a \in \C 
\}	.
\]
This is the same as the previous case, since the absence of the conjugation does not affect the geometric properties relevant to 
the constructions of Section \ref{sec:convex}.

We notice that this case includes $s=-k$ for which the target sets become
 \[
\uno = \left\{                                    
\left(
a, k a
\right) \, \colon \, a \in \C 
\right\} \,,
\qquad	
\due = \left\{                                    
\left(a, -k a
\right) \, \colon \, a \in \C 
\right\}	.
\]
We remark that  in this case, \eqref{differential inclusion} coincides with the classical Beltrami equation (see also \cite[Remark 3.21]{afs}).

\smallskip 
\paragraph{\indent \textit{3. Case $s = 0$} (corresponding to $s_1=-s_2$, $S_1=1/S_2$)} This is a degenerate case, in the sense that the constructions provided in Section 5 for $s>0$ are not well defined. Nonetheless, Theorem \ref{main theorem} still holds true. In fact, as already pointed out in \cite[Section A.3]{npp}, by an affine change of variables, the existence of a solution can be deduced by 
\cite[Lemma 4.1,Theorem 4.14]{afs}, where the authors prove the optimality of the lower critical exponent $\frac{2K}{K+1}$ for 
 the solution of a system in non-divergence form.
We remark that in this case Theorem \ref{main theorem} actually holds in the stronger sense of exact solutions, namely, 
there exists $u \in W^{1,1} (\Omega;\R)$ solution to \eqref{pde3} and such that
\[
\nabla u \in L^{\frac{2K}{K+1}}_{\rm weak} (\Omega; \R^2) \,, \quad \nabla u \notin L^{\frac{2K}{K+1}}(\Omega;\R^2) \,.
\] 
\section{The case $s>0$} \label{sec:convex}
In the present section we prove Theorem \ref{main theorem} under the hypothesis that the average $s$ is positive, namely that
\begin{equation} \label{case2}
\begin{aligned}
& 0<k<1 \,\, \text{ and } \,\,   -s_2< s_1 \leq s_2\,,  \,\, \text{ with } \,\, 0< s_2 \leq k\,, \text{ or }   \\
& 0<k<1 \,\, \text{ and } \,\,   -s_1< s_2 \leq s_1\,,  \,\, \text{ with } \,\, 0< s_1 \leq k\,.  
\end{aligned}
\end{equation}
From \eqref{case2}, recalling definitions \eqref{k s}, \eqref{little s}, \eqref{def S}, we have
\begin{gather} 
0<s \leq k \, , \qquad  1 < S \leq K \,,     \label{average} \\
1/S_2 < S_1 \leq S_2  \,, \quad 1< S_2 \leq K 	 \,,\quad
\text{ or } \quad1/S_1 < S_2 \leq S_1  \,, \quad 1< S_1 \leq K\,.
 \label{K bounds}
\end{gather}
In order to prove Theorem \ref{main theorem}, we will solve the differential inclusion \eqref{differential inclusion} by  
adapting the convex integration program developed in \cite[Section 3.2]{afs} to the present context. 
As already pointed out in the Introduction,
the anisotropy of the coefficients $\sigma_1,\sigma_2$ poses some technical difficulties in the construction of the so-called staircase laminate, needed to
obtain the desired approximate solutions. 
In fact, the anisotropy of $\sigma_1,\sigma_2$ translates into the lack of conformal invariance (in the sense of \eqref{conformal invariance}) of 
the target sets \eqref{conf targets}, while the constructions provided in \cite{afs} heavily rely on the conformal invariance of the target set
$E_{\{-k,k\}}$.
We point out that the lack of conformal invariance was a source of difficulty in \cite{npp} as well, for the proof of the optimality of the upper exponent.

This section is divided as follows. 
In Section \ref{sec:rank one} 
we establish some geometrical properties of rank-one lines in $\matrici$, that will be used in Section \ref{sec:weak} for the construction of the staircase laminate. For every sufficiently small $\delta>0$, such laminate allows us 
to define (in Proposition \ref{prop:grad}) a \textit{piecewise affine} map $f$ that solves the differential inclusion \eqref{differential inclusion} up to an arbitrarily small $L^{\infty}$ error. Moreover $f$ will have the desired integrability properties (see \eqref{crescita}, that is,
\[
\nabla f \in L^p_{\rm weak}(\Omega; \matrici) \,, \quad  p \in \left( \frac{2K}{K+1}-\delta ,\frac{2K}{K+1} \right] \,, \quad \nabla f \notin L^{\frac{2K}{K+1}}(\Omega;\matrici)\,.
\] 
Finally, in Theorem \ref{thm finale}, we remove the 
$L^{\infty}$ error introduced in Proposition \ref{prop:grad}, by means of a standard argument (see, e.g., \cite[Theorem A.2]{npp}).

Throughout this section $\cK >1$ will denote various constants depending on $K,S_1$ and $S_2$, whose precise value may change from place to place.
The complex conjugation is denoted by $J:=(0,1)$ in conformal coordinates, i.e., $Jz= \conj{z}$ for $z \in \C$. 
Moreover, $R_\theta:=(e^{i\theta},0) \in \rotazioni$ denotes the counter clockwise rotation of angle $\theta \in (-\pi,\pi]$.
Define the the argument function 
\[
\arg z := \theta \,, \quad \text{where} \quad z=|z|e^{i \theta} \,, \quad \text{with} \quad \theta \in (-\pi,\pi] \,.
\]
Abusing notation we  write
$\arg R_\theta=\theta$. For $A=(a,b) \in \matrici \setminus \{0\}$ we set
\begin{equation} \label{theta A}
\theta_A :=- \arg (b-d_1 (\conj{a})) \,. 
\end{equation}

\subsection{Properties of rank-one lines} \label{sec:rank one}

In this Section we will establish some geometrical properties of rank-one lines in $\matrici$. 
Lemmas \ref{lemma1}, \ref{lemma2} are generalizations of \cite[Lemmas 3.14, 3.15]{afs} to our target sets \eqref{conf targets}.
In Lemmas \ref{lemma4}, \ref{lemma5} we will study certain rank-one lines connecting $\target$ to $E_\infty$, that will be 
used in Section \ref{sec:weak} to construct the staircase laminate.

\begin{lemma} \label{lemma: proprieta T}
Let $Q \in T_j$ with $j \in \{1,2\}$ and $T_j$ as in \eqref{conf targets}. Then
\begin{gather} 
\det Q > 0  \quad  \text{ for } \quad Q \neq 0 \, ,   \label{positive det} \\
\va{s_j} \leq \va{\mu_Q} \leq k \,, \label{defomation2}\\
\max\{S_j,1/S_j \} \leq K(Q) \leq   K\,.   \label{distortion2} 
\end{gather}
\end{lemma}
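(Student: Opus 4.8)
The plan is to work directly in conformal coordinates using the description \eqref{conf targets} of the target sets. Write $Q \in T_j$ as $Q = (a, (-1)^{j+1} d_j(\conj a))$ for some $a \in \C$; thus the conformal coordinates of $Q$ are $q_+ = a$ and $q_- = (-1)^{j+1} d_j(\conj a)$, where $d_j(w) = k\,\Re w + i\, s_j\, \Im w$. The three claims are then purely algebraic statements about these coordinates, and by \eqref{formulae}, \eqref{second dilatation}, \eqref{distortion} all of them follow once we control $|q_+|^2 - |q_-|^2$ and the ratio $|q_-|/|q_+| = |\mu_Q|$.

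\emph{Step 1 (the determinant).} Since $|d_j(\conj a)| = |d_j(a)|$ (conjugation only flips the sign of $\Im a$, which appears squared in $|d_j(a)|^2 = k^2 (\Re a)^2 + s_j^2 (\Im a)^2$), I reduce to showing $|a|^2 > |d_j(a)|^2$ for $a \neq 0$. But $|d_j(a)|^2 = k^2(\Re a)^2 + s_j^2(\Im a)^2 \le k^2\big((\Re a)^2 + (\Im a)^2\big) = k^2 |a|^2 < |a|^2$ by \eqref{ellipt conf}, since $|s_j| \le k < 1$. Hence $\det Q = |q_+|^2 - |q_-|^2 = |a|^2 - |d_j(a)|^2 \ge (1-k^2)|a|^2 > 0$ for $Q \neq 0$, which is \eqref{positive det}.

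\emph{Step 2 (the dilatation bounds).} From $\mu_Q = q_-/\conj{q_+}$ we get $|\mu_Q| = |d_j(a)|/|a|$ for $a \neq 0$ (and one checks the degenerate cases $a = 0$ or $q_+ = 0$ separately, or notes they contribute nothing). So I must show $|s_j| \le |d_j(a)|/|a| \le k$ for all $a \neq 0$. The upper bound is exactly the computation in Step 1: $|d_j(a)|^2 \le k^2 |a|^2$. For the lower bound, write $|d_j(a)|^2 = k^2(\Re a)^2 + s_j^2 (\Im a)^2 \ge s_j^2 (\Re a)^2 + s_j^2 (\Im a)^2 = s_j^2 |a|^2$, using $k^2 \ge s_j^2$. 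Taking square roots gives \eqref{defomation2}. Finally \eqref{distortion2} is immediate from \eqref{distortion}: $K(Q) = (1+|\mu_Q|)/(1-|\mu_Q|)$, and $t \mapsto (1+t)/(1-t)$ is increasing on $[0,1)$, so \eqref{defomation2} gives $(1+|s_j|)/(1-|s_j|) \le K(Q) \le (1+k)/(1-k) = K$. It remains to identify $(1+|s_j|)/(1-|s_j|)$ with $\max\{S_j, 1/S_j\}$: from \eqref{k s}, $s_j = (S_j - 1)/(S_j + 1)$, so if $S_j \ge 1$ then $s_j \ge 0$ and $(1+s_j)/(1-s_j) = S_j$, while if $S_j < 1$ then $s_j < 0$ and $(1+|s_j|)/(1-|s_j|) = (1-s_j)/(1+s_j) = 1/S_j$; in both cases this equals $\max\{S_j, 1/S_j\}$.

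There is essentially no serious obstacle here — the lemma is a collection of elementary inequalities for the quadratic form $|d_j(a)|^2$ sandwiched between $s_j^2|a|^2$ and $k^2|a|^2$. The only points demanding a little care are the bookkeeping with conformal coordinates (making sure one uses $|d_j(\conj a)| = |d_j(a)|$ and the correct sign convention $q_- = (-1)^{j+1}d_j(\conj a)$, noting the sign is irrelevant for all three claims since only $|q_-|$ enters) and the handling of the degenerate cases $a = 0$ (where $Q = 0$, excluded in \eqref{positive det}, and the dilatation/distortion statements are vacuous or trivial). So the writeup will be short: fix the conformal-coordinate description, prove the chain $s_j^2 |a|^2 \le |d_j(a)|^2 \le k^2|a|^2$, and read off all three conclusions.
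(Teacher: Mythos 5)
Your proof is correct and follows essentially the same route as the paper: bound $|d_j(a)|$ between $|s_j|\,|a|$ and $k\,|a|$, read off the determinant and dilatation estimates in conformal coordinates, and use the monotonicity of $K(Q)$ in $|\mu_Q|$ for \eqref{distortion2}. The only difference is that you spell out the identifications $(1+k)/(1-k)=K$ and $(1+|s_j|)/(1-|s_j|)=\max\{S_j,1/S_j\}$ and the conjugation/sign bookkeeping, which the paper leaves implicit.
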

\begin{proof}
Let $Q=(q,d_1 (\conj{q})) \in \uno$. By \eqref{ellipt conf} we have 
$|s_1| |q| \leq | d_1(q) | \leq k |q|$ which readily implies \eqref{defomation2} and
\[
(1-k^2) \va{q}^2 \leq \det (Q) \leq (1-s_1^2) \va{q}^2 \,.
\]
The last inequality implies \eqref{positive det}. Finally $K(Q)$ is increasing with respect to $|\mu_{Q}|\in(0,1)$, 
therefore \eqref{distortion2} follows from \eqref{defomation2}.
The proof is analogous if  $Q \in \due$.
\end{proof}

\begin{lemma}\label{lemma1}
Let $A,B \in \matrici$ with $\det B \neq 0$ and $\det (B-A)=0$, then
\begin{equation}
\va{B} \leq \sqrt{2} \, K(B)  \va{A} . 	
\end{equation}
In particular, if $A \in \matrici$ and $Q \in T_j$, $j\in\{1,2\}$, are such that $\det(A - Q) =0$, 
then
\[
\dist (A,T_j) \leq \va{A-Q} \leq (1 + \sqrt{2} K) \dist (A,T_j) \,.
\]
\end{lemma}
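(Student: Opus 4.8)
The plan is to establish the matrix inequality $\va{B}\le\sqrt2\,K(B)\va{A}$ first, and then to deduce the ``in particular'' statement from it, using Lemma~\ref{lemma: proprieta T} to bound the distortion of elements of $T_j$.

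For the main inequality I would argue via a kernel vector. Since $\det(B-A)=0$, pick $\xi\in\R^2\setminus\{0\}$ with $(B-A)\xi=0$, so that $B\xi=A\xi$. Because $\det B\neq 0$, the matrix $B$ is invertible and its smallest singular value equals $\va{\det B}/\nor{B}$ (for a $2\times 2$ matrix the product of the singular values is $\va{\det B}$ and the largest of them is $\nor{B}$), whence $\va{B\xi}\ge(\va{\det B}/\nor{B})\va{\xi}$; on the other hand $\va{A\xi}\le\nor{A}\va{\xi}\le\va{A}\va{\xi}$. Combining these with $B\xi=A\xi$ and dividing by $\va{\xi}$ gives $\va{\det B}\le\nor{B}\va{A}$. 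Recalling from \eqref{formulae} that $\va{B}^2=2\va{b_+}^2+2\va{b_-}^2\le 2(\va{b_+}+\va{b_-})^2=2\nor{B}^2$, i.e.\ $\va{B}\le\sqrt2\,\nor{B}$, and from \eqref{distortion} that $K(B)=\nor{B}^2/\va{\det B}$, I would then conclude
\[
\sqrt2\,K(B)\va{A}=\sqrt2\,\frac{\nor{B}^2}{\va{\det B}}\va{A}\ge\sqrt2\,\frac{\nor{B}^2}{\va{\det B}}\cdot\frac{\va{\det B}}{\nor{B}}=\sqrt2\,\nor{B}\ge\va{B}\,.
\]

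For the second statement, the bound $\dist(A,T_j)\le\va{A-Q}$ is immediate since $Q\in T_j$. For the reverse bound I would pick $Q'\in T_j$ realizing the distance, $\va{A-Q'}=\dist(A,T_j)$ (such a point exists since $T_j$, being the image of $\C$ under the $\R$-linear map $a\mapsto(a,\pm d_j(\conj a))$ of \eqref{conf targets}, is a closed linear subspace of $\matrici$). If $Q=Q'$ there is nothing to prove; otherwise $Q-Q'\in T_j\setminus\{0\}$, hence $\det(Q-Q')>0$ by \eqref{positive det}, while at the same time $\det\big((Q-Q')-(A-Q')\big)=\det(Q-A)=0$. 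Applying the main inequality with $Q-Q'$ in place of $B$ and $A-Q'$ in place of $A$ gives $\va{Q-Q'}\le\sqrt2\,K(Q-Q')\va{A-Q'}$, and since $Q-Q'\in T_j$, estimate \eqref{distortion2} yields $K(Q-Q')\le K$, so $\va{Q-Q'}\le\sqrt2\,K\,\dist(A,T_j)$. The triangle inequality $\va{A-Q}\le\va{A-Q'}+\va{Q-Q'}$ then closes the argument.

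I do not expect a serious obstacle here: all the computations are elementary. The one point that needs a little care is the reduction in the second part, where the main inequality must be applied to the \emph{translated} pair $A-Q'$, $Q-Q'$ rather than to $A$ and $Q$ directly, so that the distortion appearing in the estimate is that of an element of $T_j$ (and thus $\le K$ by \eqref{distortion2}) rather than an uncontrolled quantity.
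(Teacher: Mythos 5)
Your proof is correct and follows essentially the same route as the paper: for the second statement you translate by the closest point $Q'\in T_j$, apply the first inequality to $A-Q'$ and $Q-Q'$ using \eqref{positive det} and the distortion bound \eqref{distortion2}, and finish with the triangle inequality, which is exactly the paper's argument (with $Q'$ called $Q_0$ there). The only difference is that you also supply a self-contained singular-value proof of the first inequality, which the paper simply quotes from \cite[Lemma 3.14]{afs}; your version of that step is sound.
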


\begin{proof}
The first part of the statement is exactly like in \cite[Lemma 3.14]{afs}. For the second part, one can easily adapt the proof of \cite[Lemma 3.14]{afs} 
to the present context taking into account \eqref{positive det} and \eqref{distortion2}. For the reader's convenience we recall the argument. 
Let $A \in \matrici, Q \in \uno$ and $Q_0 \in \uno$ such that $\dist (A,\uno)=|A-Q_0|$. By \eqref{positive det}, 
we can apply the first part of the lemma to $A-Q_0$ and $Q-Q_0$ to get
\[
|Q-Q_0| \leq \sqrt{2} K(Q-Q_0) |A-Q_0| \leq \sqrt{2} K |A-Q_0| \,,
\]
where the last inequality follows from \eqref{distortion2}, since $Q-Q_0 \in \uno$. Therefore
\[
|A-Q| \leq |A-Q_0| + |Q-Q_0| \leq (1+ \sqrt{2} K) |A-Q_0| =  (1+ \sqrt{2} K) \dist(A,\uno) \,. 
\]
The proof for $\due$ is analogous.
\end{proof}

\begin{lemma}\label{lemma2}
Every $A= (a,b) \in \matrici \smallsetminus \{0\}$ lies on a rank-one segment connecting $\uno$ and $E_{\infty}$. Precisely, there exist matrices $Q \in \uno \smallsetminus \{ 0\}$ and
$P \in E_{\infty} \smallsetminus \{ 0\}$, with $\det (P-Q)=0$, such that
$A \in [Q,P]$. We have $P=tJR_{\theta_A}$ for some $t>0$ and $\theta_A$ as in \eqref{theta A}. Moreover, there exists a constant $\cK >1$, depending only on $K,S_1,S_2$, such that
\begin{equation} \label{estimates2}
\frac{1}{\cK} \va{A} \leq \va{P-Q}, \va{P}, \va{Q} \leq \cK \va{A}	\,.
\end{equation}
\end{lemma}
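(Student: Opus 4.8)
The plan is to solve the two defining requirements — $A$ lying on a rank-one segment, with one endpoint in $\uno$ and the other in $E_\infty$ — explicitly in conformal coordinates, turning the existence statement into a one-dimensional intermediate-value argument; the quantitative bounds \eqref{estimates2} then drop out of elementary triangle inequalities together with \eqref{formulae} and the ellipticity bound $\va{d_1(\conj a)}\le k\va a$, $k<1$.

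\smallskip
\emph{Construction.} Write $A=(a,b)$ and seek $Q=(q,d_1(\conj q))\in\uno$, $P=(0,p)\in E_\infty$ and $\lambda\in[0,1]$ with $A=\lambda Q+(1-\lambda)P$. Comparing conformal coordinates and using that $d_1$ is $\R$-linear, this is equivalent to $q=a/\lambda$ and $(1-\lambda)p=w$, where $w:=b-d_1(\conj a)$. The degenerate cases $a=0$ (then $A\in E_\infty$: take $P=A$ and pick $Q\in\uno\setminus\{0\}$ solving the scalar equation $\va q=\va{b-d_1(\conj q)}$ by the intermediate-value theorem) and $w=0$ (then $A\in\uno$ with $a\ne0$: take $Q=A$ and any $P\in E_\infty\setminus\{0\}$ with $\va{p-d_1(\conj a)}=\va a$) are immediate, so assume $a\ne0\ne w$. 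Then $p=w/(1-\lambda)$ is a positive real multiple of $w/\va w=e^{-i\theta_A}$ (recall \eqref{theta A}), so by \eqref{multiplication} $P=tJR_{\theta_A}$ with $t=\va w/(1-\lambda)>0$, the asserted form. Imposing now $\det(P-Q)=0$, i.e.\ $\va q=\va{p-d_1(\conj q)}$, substituting $q=a/\lambda$, $d_1(\conj q)=d_1(\conj a)/\lambda$, $p=w/(1-\lambda)$ and multiplying through by $\lambda$, the condition becomes
\[
\psi(\mu):=\va{\mu\,w-d_1(\conj a)}=\va a\,,\qquad \mu:=\frac{\lambda}{1-\lambda}\in[0,\infty)\,.
\]
Since $\psi$ is continuous with $\psi(0)=\va{d_1(\conj a)}\le k\va a<\va a$ by \eqref{ellipt conf}, and $\psi(\mu)\to\infty$, the intermediate-value theorem gives $\mu^*\in(0,\infty)$ with $\psi(\mu^*)=\va a$; then $\lambda^*:=\mu^*/(1+\mu^*)\in(0,1)$ yields $Q,P\ne0$ with $\det(P-Q)=0$ and $A=\lambda^*Q+(1-\lambda^*)P\in[Q,P]$.

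\smallskip
\emph{Estimates.} By \eqref{formulae}: $\det(P-Q)=0$ gives $\va{P-Q}=2\va q$; $\va{d_1(\conj q)}\le k\va q$ gives $\sqrt2\,\va q\le\va Q\le\sqrt{2(1+k^2)}\,\va q$; also $\va P=\sqrt2\,\va w(1+\mu^*)$, $\va a,\va b\le\va A/\sqrt2$ and $\va A\le\sqrt2(\va a+\va b)$. The triangle inequality in $\psi(\mu^*)=\va a$ yields $(1-k)\va a\le\mu^*\va w\le(1+k)\va a$, hence $\va q=\va a(1+1/\mu^*)$ obeys $\va a+\tfrac{\va w}{1+k}\le\va q\le\va a+\tfrac{\va w}{1-k}$. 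Combining these with $\va w\le\tfrac{1+k}{\sqrt2}\va A$ and $\va a+\va b\le(1+k)(\va a+\va w)$ — the latter from $\va b\le\va w+k\va a$ — gives $\tfrac1{\cK}\va A\le\va q,\va P,\va{P-Q}\le\cK\va A$ for some $\cK>1$ depending only on $k$, hence on $K$; together with $\va Q\asymp\va q$ this is \eqref{estimates2}.

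\smallskip
The step I expect to be the real obstacle is this last one, and within it the uniform \emph{lower} bounds on $\va Q$ and $\va P$ in the regimes $\va a\ll\va b$ and $\va b\ll\va a$: there $\lambda^*\to0$ or $\lambda^*\to1$, so one has to see that the blow-up of $\va q=\va a/\lambda^*$ (resp.\ of $\va p=\va w/(1-\lambda^*)$) is precisely compensated by the smallness of $\va a$ (resp.\ by the balance $\mu^*\va w\asymp\va a$). This is also the place where the anisotropy enters: the only property of $d_1$ used anywhere above is $\va{d_1(\conj a)}\le k\va a$ with $k<1$, so once it is isolated the argument runs parallel to \cite[Lemma 3.15]{afs}, even though here $\uno\ne E_k$.
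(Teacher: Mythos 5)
Your proof is correct, and the existence part is in substance the paper's own construction: the paper writes $A=(a,d_1(\conj a))+\tfrac1t\,(0,t(b-d_1(\conj a)))$ and rescales by $\rho=1+1/t_0$, which is exactly your convex combination with $t_0=\mu^*$ and $\lambda^*=1/\rho$; your scalar equation $\va{\mu w-d_1(\conj a)}=\va a$ is the paper's rank-one condition, solved there by uniqueness/monotonicity (using $\det Q>0$) rather than by the intermediate value theorem — an immaterial difference. Where you genuinely take a different route is in the estimates \eqref{estimates2}: the paper argues softly, getting $\va{P-Q}\geq \va A/\cK$ from $\dist(A,\uno)+\dist(A,E_\infty)\leq\va{P-Q}$ together with the fact that the subspaces $\uno$ and $E_\infty$ meet only at $0$, and then transferring the bound to $\va P,\va Q$ via Lemma \ref{lemma1} combined with \eqref{positive det} and \eqref{distortion2}; you instead extract all bounds explicitly from the defining equation by triangle inequalities, using only $\va{d_1(z)}\leq k\va z$ with $k<1$. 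This handles precisely the regimes $\va a\ll\va b$ and $\va b\ll\va a$ that you flagged (the balance $\mu^*\va w\in[(1-k)\va a,(1+k)\va a]$ does the compensation), and it even shows the constant can be taken to depend on $K$ alone, which is slightly sharper than the statement; the paper's route is shorter given that Lemma \ref{lemma1} is already available, yours is self-contained and quantitative. Two cosmetic points: in the degenerate cases $a=0$ and $b=d_1(\conj a)$ you should also record that \eqref{estimates2} holds for your choices (it does, by the same one-line triangle inequalities), and in the case $b=d_1(\conj a)$ the representation $P=tJR_{\theta_A}$ is vacuous because $\theta_A$ in \eqref{theta A} is undefined — the paper implicitly operates under the same convention.
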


\begin{proof}
The proof can be deduced straightforwardly from the one of \cite[Lemma 3.15]{afs}.  
We decompose any $A=(a,b)$ as 
\[
A= (a,d_1(\conj{a})) + \frac{1}{t} (0, tb- t d_1 (\conj{a})) = Q + \frac{1}{t} P_t\,,
\]	
with $Q \in \uno$ and $P_t \in E_{\infty}$. The matrices $Q$ and $P_t$ are rank-one connected if and only if $\va{a}=\va{d_1 (\conj{a}) + t (b - d_1 (\conj{a}))   }$. Since $\det Q > 0$ for $Q \neq 0$, it is easy to see that
there exists only one $t_0>0$ such that the last identity is satisfied. We then set 
$\rho:=1+1/t_0$ so that
\[
A=\frac{1}{\rho} (\rho \, Q)  +   \frac{1}{t_0 \rho} (\rho \, P_{t_0}) \,.
\]
The latter is the desired decomposition, since $\rho \, Q \in \uno$, $\rho P_{t_0} \in E_{\infty}$ are rank-one connected, $\rho>0$ and $\rho^{-1} + (t_0 \rho)^{-1}=1$. 
Also notice that $\rho P_{t_0} = \rho t_0 |b-d_1(\conj{a})| J R_{\theta_A}$ as stated.

Finally let us prove \eqref{estimates2}. Remark that
\[
\dist (A,\uno) + \dist (A,E_\infty) \leq |A-P| + |A-Q| = |P-Q| \,.
\]
By the linear independence of $\uno$ and $E_{\infty}$,
we get
\[
\frac{1}{c_K} |A| \leq |P-Q| \,. 
\] 
Using Lemma \ref{lemma1}, \eqref{positive det} and \eqref{distortion2} we obtain
\[
|P| \leq c_K |A| , \quad |Q| \leq c_K |A|, \quad |Q| \leq c_K |P| , \quad |P| \leq c_K |Q|.    
\]
By the triangle inequality,
\[
|P-Q| \leq |P| + |Q| \leq (1+c_K) \min ( |P|,|Q| ),
\]
and \eqref{estimates2} follows.
\end{proof}

We now turn our attention to the study of rank-one connections between the target set $\target$ and $E_{\infty}$. 

\begin{lemma} \label{lemma4}
Let $R=(r,0)$ with $\va{r}=1$ and $a \in \C \smallsetminus \{0\}$. For $j \in \{1,2\}$ define
\begin{gather}
Q_1 (a) :=\lambda_1 (a,d_1 (\conj{a}))  \in \uno \,,   \quad Q_2 (a) := \lambda_2 (-a,d_2 (\conj{a})) \in \due \,,  \notag \\
\lambda_j (a) := \frac{1}{ \sqrt{B_j^2 (a) + A_j (a)} + B_j (a)}	 \,, \label{lambda} \\
\begin{cases}
A_j (a) := \det (a,d_j (a)) = \va{a}^2 - \va{d_j (a)}^2 \,, \\
B_j (a) := \Re \, (\conj{r} \, d_j (a)) \,.
\end{cases}   \label{coefficienti}
\end{gather}
Then $\lambda_j >0$, $A_j >0$ and $\det(Q_j-JR)=0$. Moreover there exists a constant $\cK>1$ depending only on $K,S_1,S_2$ such that
\begin{equation} \label{stima Q}
\frac{1}{\cK} \leq \va{Q_j (a)} \leq \cK \,,	
\end{equation}
 for every $a \in \C \smallsetminus \{0\}$ and $R \in \rotazioni$.
\end{lemma}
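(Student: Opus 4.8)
The plan is to verify each assertion by direct computation, exploiting the fact that $JR = (0, \conj{r})$ in conformal coordinates (since $J=(0,1)$ and $R=(r,0)$, formula \eqref{multiplication} gives $JR = (1\cdot 0 + 0\cdot\conj{0},\, 1\cdot 0 + 0 \cdot \conj{r})$... more carefully, $J R = (0,1)\cdot(r,0) = (0\cdot r + 1 \cdot \conj 0, 0\cdot 0 + 1 \cdot \conj r) = (0,\conj r)$). First I would record that $A_j(a) = |a|^2 - |d_j(a)|^2 > 0$: this is immediate from \eqref{ellipt conf}, since $|d_j(a)|^2 = k^2(\Re a)^2 + s_j^2(\Im a)^2 \le k^2 |a|^2 < |a|^2$. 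Positivity of $\lambda_j$ then follows from \eqref{lambda}, because the denominator $\sqrt{B_j^2 + A_j} + B_j$ is a sum of $B_j$ and something strictly larger than $|B_j|$ (as $A_j>0$), hence strictly positive.

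Next I would check the rank-one condition $\det(Q_j - JR)=0$. Take $j=1$: then $Q_1 - JR = (\lambda_1 a,\ \lambda_1 d_1(\conj a) - \conj r)$, so by \eqref{formulae}, $\det(Q_1-JR) = \lambda_1^2|a|^2 - |\lambda_1 d_1(\conj a) - \conj r|^2$. Expanding the second term gives $\lambda_1^2 |d_1(\conj a)|^2 - 2\lambda_1 \Re(\conj r\, d_1(\conj a)) + 1$. Now observe that $\overline{d_1(\conj a)} = k\Re a - i s_1 \Im a = d_1(a)$ with the sign of the imaginary part flipped — in fact $d_j$ as defined in \eqref{k s} satisfies $\overline{d_j(\conj a)} = d_j(a)$ only if one is careful, but what matters is that $|d_1(\conj a)| = |d_1(a)|$ and $\Re(\conj r\, d_1(\conj a))$ matches $B_1(a) = \Re(\conj r\, d_1(a))$ up to the conjugation convention built into \eqref{coefficienti}; either way the vanishing of the determinant reduces to the quadratic $\lambda_1^2 A_1 - 2\lambda_1 B_1 + ... $ — wait, let me recompute: $\det(Q_1 - JR) = \lambda_1^2(|a|^2 - |d_1(\conj a)|^2) + 2\lambda_1 \Re(\conj r\, d_1(\conj a)) - 1 = \lambda_1^2 A_1 + 2\lambda_1 B_1 - 1$, so I need $\lambda_1$ to be the positive root of $A_1 t^2 + 2 B_1 t - 1 = 0$, which is exactly $t = \frac{-B_1 + \sqrt{B_1^2 + A_1}}{A_1} = \frac{1}{\sqrt{B_1^2+A_1} + B_1}$, matching \eqref{lambda}. (The case $j=2$ is identical, with $Q_2 = \lambda_2(-a, d_2(\conj a))$ and the sign of $a$ not affecting $|a|^2$.) This confirms the rank-one identity.

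Finally, for the two-sided bound \eqref{stima Q}: since $|Q_j(a)|^2 = 2\lambda_j^2(|a|^2 + |d_j(\conj a)|^2)$ by \eqref{formulae}, and $|d_j(\conj a)| \in [|s_j||a|, k|a|]$, we have $|Q_j(a)| = \lambda_j |a| \cdot c(a)$ with $c(a) = \sqrt{2}\sqrt{1 + |d_j(\conj a)|^2/|a|^2} \in [\sqrt 2, 2]$ bounded above and below by absolute constants. It then suffices to bound $\lambda_j(a)|a|$ above and below. From \eqref{lambda}, $\lambda_j |a| = \frac{|a|}{\sqrt{B_j^2 + A_j} + B_j}$; using $A_j \in [(1-k^2)|a|^2,\, |a|^2]$ and $|B_j| = |\Re(\conj r\, d_j(a))| \le |d_j(a)| \le k|a|$ (recall $|r|=1$), one gets $\sqrt{B_j^2 + A_j} + B_j \le \sqrt{k^2|a|^2 + |a|^2} + k|a| = (k + \sqrt{1+k^2})|a|$, giving the lower bound $\lambda_j|a| \ge \frac{1}{k+\sqrt{1+k^2}}$; and $\sqrt{B_j^2+A_j}+B_j \ge \sqrt{A_j} \ge \sqrt{1-k^2}\,|a|$... hmm, that needs $B_j \ge -\sqrt{B_j^2+A_j} + \sqrt{A_j}$, i.e. it needs a little care when $B_j<0$, so instead I would use $\sqrt{B_j^2+A_j}+B_j = \frac{A_j}{\sqrt{B_j^2+A_j}-B_j} \ge \frac{A_j}{2\sqrt{B_j^2+A_j}} \ge \frac{(1-k^2)|a|^2}{2\sqrt{(1+k^2)}|a|}$, giving $\lambda_j|a| \le \frac{2\sqrt{1+k^2}}{1-k^2}$, a constant depending only on $k$, hence on $K$. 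Combining, \eqref{stima Q} holds with $\cK$ depending only on $K$ (and a fortiori on $K,S_1,S_2$), uniformly in $a \ne 0$ and $R \in \rotazioni$.

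The only mildly delicate point is keeping the conjugation bookkeeping in \eqref{coefficienti} consistent — i.e. making sure the $B_j$ appearing in the quadratic for $\lambda_j$ is exactly the one producing $\det(Q_j - JR)=0$ — but this is a routine check against \eqref{multiplication} and \eqref{formulae}; no real obstacle is expected.
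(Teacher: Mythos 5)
Your proof is correct, and its core coincides with the paper's: you derive the rank-one condition $\det(Q_j-JR)=0$ as the quadratic $A_j\lambda_j^2+2B_j\lambda_j-1=0$ and check that \eqref{lambda} is precisely its positive root, which is exactly the paper's argument (the positivity of $A_j$, which the paper quotes from \eqref{positive det}, you reprove directly via $\va{d_j(a)}\leq k\va{a}$ — fine). Your conjugation bookkeeping lands on the right sign because $d_j(\conj{a})=\conj{d_j(a)}$, so the cross term is indeed $+2\lambda_j B_j(a)$; the intermediate expression $\Re(\conj{r}\,d_1(\conj{a}))$ should read $\Re(r\,d_1(\conj{a}))$, but this is a notational slip you effectively flag and it does not affect the final quadratic. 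The only genuine difference is in \eqref{stima Q}: the paper exploits the homogeneity $\lambda_j(t\omega)=\lambda_j(\omega)/t$ to reduce to $\va{\omega}=1$ and then invokes continuity and positivity of $\lambda_j$ on the compact set $S^1\times\rotazioni$, whereas you bound $\lambda_j(a)\va{a}$ from above and below by hand, using $A_j\in[(1-k^2)\va{a}^2,\va{a}^2]$, $\va{B_j}\leq k\va{a}$ and the identity $\sqrt{B_j^2+A_j}+B_j=A_j/(\sqrt{B_j^2+A_j}-B_j)$ to handle the case $B_j<0$. Both routes are sound; yours is slightly longer but produces explicit constants $\cK=\cK(k)$ rather than a compactness-based, non-explicit one, and it makes the uniformity in $R\in\rotazioni$ visible directly.
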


\begin{proof}
Condition $\det(Q_j-JR)=0$ is equivalent to 
$|\lambda_j a |= | \lambda_j d_j (\conj{a}) - \conj{r} |$, that is
\begin{equation} \label{secondo grado}
A_j (a)  \lambda_j^2 + 2 B_j (a) \lambda_j  - 1 = 0
\end{equation}
with $A_j,B_j$ defined by \eqref{coefficienti}. Notice that $A_j >0$ by \eqref{positive det}. Therefore $\lambda_j$ defined in \eqref{lambda} 
solves \eqref{secondo grado} and satisfies $\lambda_j >0$. 

We will now prove \eqref{stima Q}. Since $a \neq 0$, we can write $a= t \omega$ for some $t>0$ and $\omega \in \C$, with $\va{\omega}=1$. We have
$A_j(a)= t^2 A_j (\omega)$ and $B_j(a)= t B_j (\omega)$ 
so that $\lambda_j(a) = \lambda_j (\omega)/t$.
Hence
\begin{equation} \label{eq Qi}
Q_1 (a) = \lambda_1 (\omega) (\omega,d_1(\conj{\omega})) \,, \quad  Q_2 (a)  = \lambda_2 (\omega) (-\omega,d_2(\conj{\omega}))\,.
\end{equation}
Since $\lambda_j$ is continuous and positive in $(\C \smallsetminus \{0\}) \times \rotazioni$, \eqref{stima Q} follows from \eqref{eq Qi}. 
\end{proof}

\smallskip
\textbf{Notation.}
Let $\theta \in (-\pi,\pi]$. For $R_\theta=(e^{i\theta},0) \in \rotazioni$, define $x:= \cos \theta, y:= \sin \theta$ and
\begin{equation} \label{scelta di a}
a(R_\theta):= \frac{x}{k} + i \, \frac{y}{s} \,, 
\end{equation}
where $s$ is defined in \eqref{little s}. 
Identifying $SO(2)$ with the interval $(-\pi,\pi]$,
for $j=1,2$, we introduce the function
\begin{equation} \label{def lambda i}
\lambda_j \colon (-\pi,\pi] \to (0,+\infty) \qquad \text{defined by} \qquad \lambda_j(R_\theta):=\lambda_j (a(R_\theta)) 
\end{equation}
with $\lambda_j (a(R_\theta))$ as in \eqref{lambda}.
Furthermore, for $n \in \N$ set
\begin{equation} \label{L h}
\begin{gathered} 
M_j (R_\theta) := \frac{\lambda_j }{\displaystyle \frac{\lambda_1  + \lambda_2}{2} - \lambda_1 \lambda_2} \,, \quad l(R_\theta):=  \frac{M_1 + M_2}{2} -1  \,, \quad m := \min_{ \theta \in (-\pi,\pi]} \frac{ M_2}{2-M_2}    \\
L(R_\theta):= \frac{1+l}{1-l}  \, , \quad \beta_n (R_\theta) := 1 - \frac{1+l}{n} \,,  \quad  p (R_{\theta}):= \frac{2L}{L+1} \,.  
\end{gathered}
\end{equation}

\begin{lemma} \label{lemma5}
For $j =1,2$, the functions
\begin{gather*}
\lambda_j \colon (-\pi,\pi] \to \left[  \frac{s}{1+s_j},\frac{k}{1+k}    \right] \,, \qquad  
l \colon (-\pi ,\pi] \to [s,k] \,, \\  
L \colon (-\pi,\pi] \to \left[  S,K \right]  \,, \qquad   p \colon (-\pi,\pi] \to \left[ \frac{2S}{S+1},\frac{2K}{K+1} \right] \,,
\end{gather*}
are even, surjective and their periodic extension is $C^1$. Furthermore,   
they are strictly decreasing in $(0,\pi/2)$ and strictly increasing in $(\pi/2,\pi)$, with maximum at $\theta=0,\pi$ and minimum at $\theta=\pi/2$.
Finally 
\begin{gather}
0<M_j <2  \, , \qquad 	m>0  \label{stima M} \,, \\
 \prod_{j=1}^n  \beta_j (R_{\theta}) = \frac{1}{n^{p(R_\theta)}} + O\left(\frac{1}{n}\right)  \,, \label{asintotica produttoria}
\end{gather}
 where $O(1/n) \to 0$ as $n \to \infty$ uniformly for $\theta \in (-\pi,\pi]$.
\end{lemma}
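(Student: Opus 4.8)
The plan is to derive everything from the explicit formula \eqref{scelta di a} for $a(R_\theta)$ together with the definitions \eqref{lambda}, \eqref{coefficienti}, \eqref{L h}. First I would compute $A_j(a(R_\theta))$ and $B_j(a(R_\theta))$ in closed form. Since $d_j(a) = k\Re a + i s_j \Im a$ and $a(R_\theta) = x/k + i\,y/s$ with $x=\cos\theta$, $y=\sin\theta$, one gets $d_j(a(R_\theta)) = x + i\, (s_j/s)\, y$, so $B_j = \Re(\conj{r}\,d_j(a)) = x\cdot x + y\cdot (s_j/s) y = x^2 + (s_j/s) y^2$ (here $r = e^{i\theta}$, so $\conj r = x - iy$ and the real part of $(x-iy)(x + i(s_j/s)y)$ is indeed $x^2 + (s_j/s)y^2$). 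Similarly $A_j = |a(R_\theta)|^2 - |d_j(a(R_\theta))|^2 = x^2/k^2 + y^2/s^2 - x^2 - (s_j/s)^2 y^2$. From these expressions one reads off that $A_j$ and $B_j$ are even, $\pi$-periodic, and $C^1$ functions of $\theta$; consequently so is $\lambda_j = 1/(\sqrt{B_j^2+A_j}+B_j)$, since the denominator is bounded away from zero by Lemma \ref{lemma4} (which gives $\lambda_j>0$, $A_j>0$, hence $\sqrt{B_j^2+A_j}+B_j \ge \sqrt{A_j}>0$). Evaluating at $\theta=0$ (so $x=1$, $y=0$) gives $A_j = 1/k^2-1$, $B_j = 1$, whence a short computation yields $\lambda_j = k/(1+k)$; at $\theta=\pi/2$ (so $x=0$, $y=1$) gives $A_j = 1/s^2 - s_j^2/s^2$, $B_j = s_j/s$, whence $\lambda_j = s/(1+s_j)$. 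This identifies the claimed endpoints of the range of $\lambda_j$.

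The monotonicity in $(0,\pi/2)$ and $(\pi/2,\pi)$ is the part I expect to require the most care. Parametrising by $t = \cos^2\theta \in [0,1]$ (decreasing on $(0,\pi/2)$), one can write $A_j$ and $B_j$ as affine functions of $t$, so $\lambda_j(t)$ is a smooth function of $t$ and it suffices to show $\lambda_j$ is strictly monotone in $t$; by the chain rule this transfers to strict monotonicity in $\theta$ on each of the two intervals, with the even symmetry and periodicity giving the behaviour on $(-\pi,0)$. Monotonicity of $\lambda_j$ in $t$ amounts to monotonicity of $g_j(t):=\sqrt{B_j(t)^2+A_j(t)}+B_j(t)$; differentiating, $g_j' = B_j' + (2B_jB_j' + A_j')/(2\sqrt{B_j^2+A_j})$, and since $B_j,\sqrt{B_j^2+A_j}>0$ the sign of $g_j'$ is controlled by the signs of the (constant) coefficients $B_j'$ and $A_j'$, which from the formulas above are $B_j' = 1 - s_j/s$ and $A_j' = 1/k^2 - 1/s^2 - 1 + s_j^2/s^2$; one checks these have a definite sign under the hypotheses \eqref{case2}–\eqref{K bounds} (in particular $s\le k<1$ forces $A_j'<0$, and $g_j$ turns out to be increasing in $t$ so $\lambda_j$ is decreasing in $t$, i.e. increasing on $(0,\pi/2)$ — wait, $t$ decreases as $\theta$ increases on $(0,\pi/2)$, so $\lambda_j$ decreasing in $t$ gives $\lambda_j$ increasing in $\theta$; the statement says decreasing, so the sign bookkeeping must be done carefully and the actual conclusion is that $\lambda_j$ is decreasing on $(0,\pi/2)$, hence $g_j$ decreasing in $t$, which will follow once the signs are pinned down correctly). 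The maximum at $\theta=0,\pi$ and minimum at $\theta=\pi/2$ then follow from the monotonicity together with the endpoint values just computed.

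Once $\lambda_j$ is understood, the remaining functions are built from it by elementary operations that preserve evenness, periodicity and $C^1$-regularity: $M_j = \lambda_j/\big(\tfrac{\lambda_1+\lambda_2}{2}-\lambda_1\lambda_2\big)$, $l = \tfrac{M_1+M_2}{2}-1$, $L = \tfrac{1+l}{1-l}$, $p = \tfrac{2L}{L+1}$. I would check that the common denominator $\tfrac{\lambda_1+\lambda_2}{2}-\lambda_1\lambda_2$ is positive and bounded away from zero — this uses $0<\lambda_j\le k/(1+k)$, so $\lambda_1\lambda_2 \le \tfrac{k}{1+k}\min(\lambda_1,\lambda_2) < \tfrac{1}{2}(\lambda_1+\lambda_2)$ since $\tfrac{k}{1+k}<\tfrac12$ — so $M_j$ inherits the regularity and monotonicity of $\lambda_j$ (the common positive factor $1/(\tfrac{\lambda_1+\lambda_2}{2}-\lambda_1\lambda_2)$ depends on $\theta$, so one must argue monotonicity of $M_j$ directly, e.g. by rewriting $M_j = 1/\big(\tfrac{\lambda_1+\lambda_2}{2\lambda_j} - \lambda_{3-j}\big)$ for $j=1,2$; at the symmetric endpoints $\theta=0,\pi$ where $\lambda_1=\lambda_2$ this gives $M_j = 1/(1-\lambda_j)$). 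Then $l$ is a positive combination of the $M_j$, shifted, and the ranges $l\in[s,k]$, $L\in[S,K]$, $p\in[\tfrac{2S}{S+1},\tfrac{2K}{K+1}]$ follow by plugging in the endpoint values of $\lambda_j$ (at $\theta=0$: $\lambda_1=\lambda_2=k/(1+k)$ gives $M_j=1/(1-\tfrac{k}{1+k})=1+k$, so $l = (1+k)-1 = k$, $L=K$, $p=\tfrac{2K}{K+1}$; at $\theta=\pi/2$: a parallel computation using $\lambda_j=s/(1+s_j)$ yields $l=s$, $L=S$, $p=\tfrac{2S}{S+1}$), noting that $x\mapsto\tfrac{1+x}{1-x}$ and $x\mapsto\tfrac{2x}{x+1}$ are increasing so they carry intervals to intervals and preserve monotonicity type. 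The bounds \eqref{stima M} — namely $0<M_j<2$ and $m>0$ — follow from $M_j = 1/\big(\tfrac{\lambda_1+\lambda_2}{2\lambda_j}-\lambda_{3-j}\big)$ being continuous and positive on the compact circle with $M_j<2 \iff \tfrac{\lambda_1+\lambda_2}{2\lambda_j}-\lambda_{3-j} > \tfrac12 \iff \lambda_1+\lambda_2 > \lambda_j + 2\lambda_j\lambda_{3-j} \iff \lambda_{3-j} > 2\lambda_j\lambda_{3-j} \iff \lambda_j<\tfrac12$, true since $\lambda_j\le\tfrac{k}{1+k}<\tfrac12$; and $m = \min_\theta M_2/(2-M_2)>0$ since $0<M_2<2$ on the compact circle. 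Finally, for \eqref{asintotica produttoria} I would take logarithms: $\log\prod_{j=1}^n\beta_j = \sum_{j=1}^n \log\big(1-\tfrac{1+l}{j}\big) = -(1+l)\sum_{j=1}^n \tfrac1j + \sum_{j=1}^n\big[\log(1-\tfrac{1+l}{j})+\tfrac{1+l}{j}\big]$; the first sum is $(1+l)(\log n + \gamma) + O(1/n)$ and the second converges absolutely (its terms are $O(1/j^2)$, uniformly since $l$ is bounded), so $\prod_{j=1}^n\beta_j = e^{-(1+l)\log n}\cdot C(\theta)(1+O(1/n)) $. Here I would need to check that the limiting constant is exactly $1$ to the stated order — i.e. that $C(\theta) = 1 + O(1/n)$ absorbed — which is where the precise truncation of the product (starting the index so that $\beta_j>0$, and the $O(1/n)$ uniformity in $\theta$ using $\sup_\theta(1+l)\le 1+k$) matters; the uniformity of the remainder follows from uniform bounds on $l$. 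The exponent $1+l$ satisfies $2L/(L+1) = 2 - 2/(L+1)$ and $1+l = \tfrac{2}{1-l}\cdot\tfrac{1-l}{2}\cdots$ — more directly, $p = \tfrac{2L}{L+1}$ with $L = \tfrac{1+l}{1-l}$ gives $p = 1+l$ after simplification, so $n^{-(1+l)} = n^{-p(R_\theta)}$, matching \eqref{asintotica produttoria}. The main obstacle, as indicated, is the careful sign analysis establishing strict monotonicity of $\lambda_j$ (and then $M_j$, $l$) on the two half-intervals under the anisotropic hypotheses \eqref{case2}; everything else is bookkeeping with monotone one-variable maps and a standard harmonic-sum asymptotic.
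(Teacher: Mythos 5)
Your setup is the same as the paper's: the closed-form expressions for $A_j$ and $B_j$ as affine functions of $x^2=\cos^2\theta$ (these are exactly \eqref{coefficienti2}), the endpoint evaluations $\lambda_j=k/(1+k)$ at $\theta=0$ and $\lambda_j=s/(1+s_j)$ at $\theta=\pi/2$, the simplification $p=1+l$, the equivalence $M_j<2\iff\lambda_j<\tfrac12$, and a logarithmic estimate for $\prod_j\beta_j$. But the core of the lemma --- strict monotonicity of $\lambda_j$ on $(0,\pi/2)$ --- is not actually proved in your proposal, and the mechanism you suggest is insufficient. You claim the sign of $g_j'=B_j'+(2B_jB_j'+A_j')/(2\sqrt{B_j^2+A_j})$ is ``controlled by the signs of the (constant) coefficients $B_j'$ and $A_j'$''; that only works when those signs agree. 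In the genuinely anisotropic case they do not: if $s_1<s_2$ then $s_1<s$, so the coefficient of $x^2$ in $B_1$ is positive while the coefficient of $x^2$ in $A_1$ is negative (this is \eqref{der Aj}--\eqref{der B1}), and the two terms in $g_1'$ compete. This is exactly where the paper does its real work: it shows that $\lambda_1'>0$ is equivalent to ${A_1'}^2+4A_1'B_1B_1'-4A_1{B_1'}^2>0$, reduces this to \eqref{cond f} with $f(s_1,s_2)=abcd$, and fixes the signs of the four factors $a,b,c,d$ from $-s_2<s_1<s_2\le k$ (plus the symmetric case $s_2<s_1$). Your own text flags the unresolved ``sign bookkeeping'' and defers it; since this computation is the main new content of the lemma compared with the isotropic case, the gap is essential, not cosmetic.

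A secondary weakness: for $l$ (hence $L$ and $p$) you compute only endpoint values and propose to prove monotonicity of each $M_j$ separately, which you neither do nor is it the natural route (each $M_j$ involves the ratio $\lambda_2/\lambda_1$, whose monotonicity is unclear). The paper instead observes $\tfrac{M_1+M_2}{2}=\tfrac{1}{1-H}$, i.e.\ $l=\tfrac{H}{1-H}$ with $H$ the harmonic mean of $\lambda_1,\lambda_2$, so that $H'=2(\lambda_1'\lambda_2^2+\lambda_1^2\lambda_2')/(\lambda_1+\lambda_2)^2$ has the sign of the $\lambda_j'$ and monotonicity of $l$, $L$, $p$ follows at once. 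Your treatment of \eqref{stima M} matches the paper's, and your harmonic-sum argument for \eqref{asintotica produttoria} is acceptable: the worry about the multiplicative constant being exactly $1$ is immaterial, because $p(R_\theta)>1$ here, so any quantity comparable to $n^{-p(R_\theta)}$ is already $O(1/n)$, and what is used later is precisely the uniform two-sided comparison between $\prod_{j=1}^n\beta_j(R_\theta)$ and $n^{-p(R_\theta)}$ recorded in \eqref{prod beta}, which your expansion also delivers (your remark about truncating the product at indices where $\beta_j>0$ is a fair point of care). The verdict, however, is that the decisive monotonicity argument for $\lambda_j$ under \eqref{case2} is missing.
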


\begin{proof}
Let us consider $\lambda_j$ first. By definitions \eqref{coefficienti}, \eqref{scelta di a} and by recalling that $x^2+y^2 = 1$, 
we may regard $A_j,B_j$ and $\lambda_j$ as functions of $x \in [-1,1]$. 
In particular,
\begin{equation} \label{coefficienti2}
A_j (x)  =  \left( \frac{1-k^2}{k^2}  -  \frac{1-s_j^2}{s^2}   \right) x^2 +    \frac{1-s_j^2}{s^2}  \, , \quad
B_j (x)   = \left( 1 - \frac{s_j}{s} \right)  x^2 +\frac{s_j}{s} \,.
\end{equation}
By symmetry we can restrict to $x \in [0,1]$.
We have three cases:

\smallskip
\textit{1. Case $s_1=s_2$.} Since $s_1=s_2=s$, from \eqref{coefficienti2} we compute 
\[
\lambda_1 (x) =\lambda_2(x) = { \left( 1 + \sqrt{  \left( \frac{1}{k^2} - \frac{1}{s^2}    \right) x^2 +  \frac{1}{s^2}  \,  } \right) }^{-1} \,.
\]
By \eqref{case2},\eqref{average} this is a strictly increasing function in $[0,1]$, and the rest of the thesis for $\lambda_j$ readily follows.

\smallskip
\textit{2. Case $s_1<s_2$.}
By \eqref{case2} we have 
\begin{equation} \label{second case}
-s_2<s_1<s  \qquad \text{and} \qquad 0<s<s_2 \,.
\end{equation}
Relations \eqref{coefficienti2} and \eqref{second case} imply that
\begin{align}
A_j' (0)=0 \,, \quad A_j' (x) < 0 \,, \quad \text{ for } \quad x \in (0,1] \,,    \label{der Aj} 	\\
B_1' (0)=0 \,, \quad  B_1' (x) > 0 \,, \quad \text{ for } \quad x \in (0,1]  \,,\label{der B1} \\
B_2' (0)=0 \,, \quad   B_2' (x) < 0 \,, \quad \text{ for } \quad x \in (0,1]  \,.\label{der B2} 
\end{align}
We claim that
\begin{equation} \label{claim1}
	\lambda_j' (0)=0  \,, \quad \lambda_j' (x) > 0 \,, \quad \text{ for } \quad x \in (0,1] \,.
\end{equation}
Before proving \eqref{claim1}, notice that $\lambda_j(0)= \displaystyle\frac{s}{1+s_j}$ and $\lambda_j(1)=\displaystyle \frac{k}{1+k}$, therefore the surjectivity of $\lambda_j$ will follow from \eqref{claim1}. 
Let us now prove \eqref{claim1}.
For $j=2$ condition \eqref{claim1} is an immediate consequence of the definition of $\lambda_2$ and \eqref{der Aj}, \eqref{der B2}. For $j=1$ we have
\begin{equation} \label{der lambda1}
\lambda_1'(x) = -\frac{1}{\lambda_1^2} \left(    \frac{A_1'+2 B_1 B_1'}{2 \sqrt{B_1^2 + A_1}} + B_1 ' \right) 
\end{equation}
and we immediately see that $\lambda_1'(0)=0$ by \eqref{der Aj} and \eqref{der B1}. Assume now that $x \in (0,1]$. By \eqref{der B1} and \eqref{der lambda1}, the claim \eqref{claim1} is equivalent to
\[
{A_1'}^2 + 4 A_1' B_1 B_1' - 4 A_1 {B_1'}^2 > 0 \,,   \quad \text{ for } \quad x \in (0,1] \,.
\] 
After simplifications, the above inequality is equivalent to
\begin{equation} \label{cond f}
 \frac{4 f(s_1,s_2)}{k^4 {(s_1+s_2)}^4} \, x^2 >0 \,,     \quad \text{ for } \quad x \in (0,1]\,,
\end{equation}
where $f(s_1,s_2)=a b c d$, with
\begin{align*}
a   = -2k + (1+k) s_1 + (1-k) s_2   \,, \qquad 
b   = 2k + (1+k) s_1 + (1-k) s_2 \,,\\	
c   = -2k - (1-k) s_1 - (1+k) s_2  \,, \qquad
d   = 2k - (1-k) s_1 - (1+k) s_2  \,.
\end{align*}
We have that $a,c <0$ since $s_1<s_2$ and $b,d>0$ since $s_1>-s_2$. Hence
\eqref{cond f} follows.

\smallskip
\textit{3. Case $s_2<s_1$.} In particular we have
\begin{equation} \label{third case}
-s_1<s_2<s  \qquad \text{and} \qquad 0<s<s_1 \,.
\end{equation}
This is similar to the previous case. Indeed \eqref{der Aj} is still true, but for $B_j$ we have
\begin{align}
B_1' (0)=0 \,, \quad  B_1' (x) < 0 \,, \quad \text{ for } \quad x \in (0,1] \,,  \label{der B1 due} \\
B_2' (0)=0  \,, \quad B_2' (x) > 0 \,, \quad \text{ for } \quad x \in (0,1] \,.  \label{der B2 due} 
\end{align}
This implies \eqref{claim1} with $j=1$. Similarly to the previous case, we can see that \eqref{claim1} for $j=2$ is equivalent to 
\begin{equation} \label{cond f due}
 \frac{4 f(s_2,s_1)}{k^4 {(s_1+s_2)}^4} \, x^2 >0 \,,    \quad \text{ for } \quad x \in (0,1] \,.
\end{equation}
Notice that $f$ is symmetric, therefore \eqref{cond f due} is a consequence of \eqref{cond f}.

\smallskip
We will now turn our attention to the function $l$. Notice that
\begin{equation} \label{l identity}
l= \frac{1}{1-H} - 1  \,, \quad \text{where} \quad
H:=\frac{2 \lambda_1 \lambda_2}{ \lambda_1 + \lambda_2} = 2 \, {\left( \frac{1}{\lambda_1} + \frac{1}{\lambda_2}   \right)}^{-1}
\end{equation}
is the harmonic mean of $\lambda_1$ and $\lambda_2$. 
Therefore $H$ is differentiable and even. 
By direct computation we have 
\[
H'=2 \, \frac{\lambda_1' \lambda_2^2+ \lambda_1^2 \lambda_2'}{   {(\lambda_1+\lambda_2)}^2  } \,. 
\]
Since $\lambda_j >0$, by \eqref{claim1} we have
\begin{equation} \label{derivata H}
	H' (0)=0 \,, \quad  H' (x) > 0 \,, \quad \text{ for } \quad x \in (0,1]  \,.
\end{equation}
Moreover $H(0)=\displaystyle\frac{s}{1+s}$ and $H(1)=\displaystyle\frac{k}{1+k}$. Then from \eqref{l identity}
we deduce $l(0)=s, l(1)=k$ and the rest of the statement for $l$.

\smallskip
The statements for $L$ and $p$ follow directly from the properties of $l$ and from the fact that 
$t \to \displaystyle \frac{1+t}{1-t}$, $t \to \displaystyle \frac{2t}{t+1}$ are $C^1$ and strictly increasing for $0<t<1$ and $t>1$, respectively. 

\smallskip
Next we prove \eqref{stima M}. By \eqref{case2} and the properties of $\lambda_j$, we have in particular  
\begin{equation} \label{stima lambda bis}
0< \lambda_j < \frac{1}{2} \,, \quad 
0<H < \frac{1}{2} 	\,,
\end{equation}
where $H$ is defined in \eqref{l identity}. 
Since $\lambda_j>0$, the inequality $M_j>0$ is equivalent to $H<1$, which holds by \eqref{stima lambda bis}. The inequality $M_2<2$ is instead equivalent to 
$\lambda_1 (1-2 \lambda_2)>0$, which is again true by \eqref{stima lambda bis}. The case $M_1<2$ is similar. Finally $m>0$ follows from $0<M_2<2$ and the continuity of $\lambda_j$.

\smallskip
Finally we prove \eqref{asintotica produttoria}. By definition we have $1+l= \displaystyle \frac{2 L}{L+1}= p$.  
By taking the logarithm of $\prod_{j=1}^n \beta_j (R_\theta)$, we see that there exists a constant $c>0$, depending only on $K,S_1,S_2$, such that
\begin{equation} \label{prod beta}
\va{  \log \left(   \prod_{j=1}^n  \beta_j (R_{\theta})   \right) +   p(R_\theta) \log n   }< c \,, \quad \text{for every} \quad \theta \in (-\pi,\pi] \,.
\end{equation}
Estimate \eqref{prod beta} is uniform because $\beta_j$ and $p$ are $\pi$-periodic and uniformly continuous. 
\end{proof}

\subsection{Weak staircase laminate} \label{sec:weak}

We are now ready to construct a staircase laminate in the same fashion as \cite[Lemma 3.17]{afs}.  
The steps of our staircase will be the sets
\[
\step_n := n J \rotazioni = \left\{ (0,n e^{i \theta}) \, \colon \, \theta \in (-\pi,\pi] \right\} \,, \quad n \geq 1 \,.
\]
For $0< \delta < \pi/2$ we introduce the set
\[
E_{\infty}^\delta := \{ (0,z) \in E_\infty \, \colon \, |\arg z|< \delta \} \,, \qquad \step_n^\delta := \step_n \cap E_\infty^\delta  \,.
\]

\begin{lemma}\label{lemma3}  
Let $0<\delta<\pi/4$ and $0<\rho<\min \{m, \frac{1}{2}\}$, with $m>0$ defined in \eqref{L h}.
There exists a constant $\cK >1$ depending only on $K,S_1, S_2$, such that for every $A=(a,b) \in \matrici$ satisfying 
\begin{equation} \label{dist}
\dist(A,\step_n) < \rho	\,,
\end{equation}
there exists a laminate of third order $\nu_A$, such that:
\begin{enumerate}[(i)]
\item	$\conj{\nu}_A=A$, \smallskip
\item $\spt \nu_A  \subset \target \cup \step_{n +1}\,,$ \smallskip
\item $\spt \nu_A  \subset \{ \xi \in \matrici \, \colon \, \cK^{-1} n < \va{\xi} < \cK \, n \} \,, $  \smallskip
\item $\spt \nu_A \cap \step_{n+1} = \{ (n+1) J R \}$, with $R=R_{\theta_A}$ as in \eqref{theta A}.
\end{enumerate}
Moreover  
\begin{equation} \label{growth}
\left( 1 - \cK \, \frac{\rho}{n}  \right) \beta_{n} (R) \leq \nu_A (\step_{n +1}) \leq \left( 1 + \cK \, \frac{\rho}{n}  \right) \beta_{n+2} (R) \,, 	
\end{equation}
where $\beta_n$ is defined in \eqref{L h}. If in addition $n \geq 2$ and
\begin{equation} \label{dist angolo}
\dist (A , \step_n^\delta) < \rho \,,
\end{equation}
then 
\begin{equation} 
|\arg{R}|=|\theta_A | < \delta + \rho \,.  \label{arg R bis}
\end{equation} 
In particular $\spt \nu_{A} \subset T \cup \step_{n+1}^{\delta+\rho}$.
\end{lemma}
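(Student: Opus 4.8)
The plan is to realize the staircase step from $\step_n$ to $\step_{n+1}$ by a three-stage splitting, exactly parallel to \cite[Lemma 3.17]{afs}, but with the conformal-invariance arguments replaced by the explicit rank-one geometry of Lemmas \ref{lemma2}, \ref{lemma4}, \ref{lemma5}. First I would fix $A=(a,b)$ with $\dist(A,\step_n)<\rho$ and let $R=R_{\theta_A}$ be the rotation attached to $A$ via \eqref{theta A}, so that $nJR$ is (essentially) the nearest point of $\step_n$ to $A$; note $|a|\lesssim\rho$ and $|b|=n+O(\rho)$. \emph{First split:} by Lemma \ref{lemma2} applied to $A$, write $A$ on a rank-one segment $[Q_0,P_0]$ with $Q_0\in\uno\setminus\{0\}$, $P_0\in E_\infty\setminus\{0\}$, $P_0=t_0JR_{\theta_A}$; since $\dist(A,E_\infty)\le|a|\lesssim\rho$, the mass on the $\uno$-node is $O(\rho/n)$ and the $E_\infty$-node is close to $nJR$. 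Actually, to land exactly on $(n+1)JR$ and to use both phases, I would instead split $A$ first along the rank-one direction $J$ (which connects $\step_n$ to $\step_{n+1}$: both lie in $E_\infty$, and any two elements of $E_\infty$ are rank-one connected) into a convex combination $A=(1-\mu)A'+\mu\,(n+1)JR$ where $A'\in\step_{n-?}$ — more precisely, following \cite{afs}, one slides $A$ to a point on $\step_{n+1}$ and a remainder that gets fed into the target $\target$.

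\emph{Second and third splits:} the node that is not on $\step_{n+1}$ must be resolved into $\target=\uno\cup\due$. Here I would use Lemma \ref{lemma4}: for the rotation $R=R_{\theta_A}$ and the specific choice $a=a(R_\theta)$ from \eqref{scelta di a}, the matrices $Q_1(a)\in\uno$ and $Q_2(a)\in\due$ satisfy $\det(Q_j-JR)=0$, i.e.\ each is rank-one connected to the relevant point of $E_\infty$; moreover the same computation shows $Q_1(a)$ and $-Q_2(a)$ (or the appropriate signed combination) are positioned so that a point of the form $cJR$ lies on the rank-one segment $[Q_1,-Q_2]$ after rescaling, with the splitting coefficients governed exactly by $\lambda_1,\lambda_2$, hence by $M_j$, $l$, $L$ of \eqref{L h}. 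Choosing the scaling so that the $\step$-node sits at height $n+1$ forces the mass there to be (to leading order) $\beta_n(R)$: this is precisely the content of the definition $\beta_n=1-(1+l)/n$ together with $1+l=M_1/2+M_2/2$, and the two-sided bound \eqref{growth} comes from the first-order perturbation caused by $\dist(A,\step_n)<\rho$ — one tracks how $|a|\le c_K\rho$ and $|b|=n+O(\rho)$ propagate through the explicit rational expressions, using $|\lambda_j|$ bounded above and below by Lemma \ref{lemma4}/\ref{lemma5} and $0<M_j<2$, $m>0$ from \eqref{stima M} to keep all coefficients in $[0,1]$; the hypothesis $\rho<\min\{m,1/2\}$ is exactly what guarantees this. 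Properties (i), (ii), (iv) are then immediate from the construction, and (iii) follows from \eqref{stima Q} of Lemma \ref{lemma4} and \eqref{estimates2} of Lemma \ref{lemma2}, since every node is a bounded rescaling of a fixed-norm matrix times something of size comparable to $n$.

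\emph{The angular refinement \eqref{arg R bis}:} if additionally $\dist(A,\step_n^\delta)<\rho$ with $n\ge 2$, then $A$ is within $\rho$ of a point $nJR'$ with $|\arg R'|<\delta$; since $\theta_A=-\arg(b-d_1(\conj a))$ and $|a|\lesssim\rho$, $b=nR'$-ish, a continuity/Lipschitz estimate of $\theta_A$ as a function of $A$ near $\step_n$ (the map $A\mapsto\theta_A$ is smooth away from the singular locus, and its derivative is $O(1/n)$ for $|A|\approx n$) gives $|\theta_A-\arg R'|<\rho$ for $n\ge 2$, whence $|\arg R|=|\theta_A|<\delta+\rho$; combined with (iv) and (ii), and the fact that the $\target$-nodes carry no angular constraint, this yields $\spt\nu_A\subset\target\cup\step_{n+1}^{\delta+\rho}$.

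The main obstacle I expect is twofold: (a) setting up the second/third split so that it produces exactly a point on $\step_{n+1}$ — in \cite{afs} this is trivial because $E_{\{-k,k\}}$ is conformally invariant, so one can rotate everything to a standard position, whereas here $\uno,\due$ are \emph{not} conformally invariant, which is why one must commit to the explicit $a=a(R_\theta)$ of \eqref{scelta di a} and verify by hand (Lemma \ref{lemma4}) that the rank-one connections to $JR$ actually close up; and (b) extracting the \emph{uniform} (in $\theta$ and in $n$) two-sided bound \eqref{growth} with the correct constants $\beta_n(R)$ on the left and $\beta_{n+2}(R)$ on the right — this requires carefully bookkeeping the $O(\rho/n)$ errors through the three splittings and invoking the $C^1$, even, $\pi$-periodic structure of $\lambda_j,l,L,p$ established in Lemma \ref{lemma5} to convert pointwise estimates into uniform ones. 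Everything else is routine rank-one bookkeeping once these two points are in place.
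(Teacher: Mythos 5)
Your overall skeleton is the same as the paper's: first decompose $A$ along a rank-one segment joining $\uno$ to $E_\infty$ via Lemma \ref{lemma2}, obtaining $P=tJR_{\theta_A}$ with $\va{t-n}<\rho$ and only $O(\rho/n)$ of the mass deposited on $\uno$, and then resolve $P$ by two further splits built from the matrices $Q_1\in\uno$, $Q_2\in\due$ of Lemma \ref{lemma4}. But the modification you then propose for the first step is not admissible: $J$ is not a rank-one direction ($\det J=-1$), and it is false that any two elements of $E_\infty$ are rank-one connected, since for $P=(0,p)$, $P'=(0,p')$ one has $\det(P-P')=-\va{p-p'}^2<0$. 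Hence no laminate step internal to $E_\infty$ (in particular none sliding from $\step_n$ to $\step_{n+1}$) exists; the increase in modulus from $t\approx n$ to $n+1$ must be produced entirely by the two rank-one segments towards the targets, namely $tJR=\mu_2\,tQ_1+(1-\mu_2)\tilde P$ followed by $\tilde P=\mu_3(n+1)Q_2+(1-\mu_3)(n+1)JR$ with an auxiliary matrix $\tilde P$ on the line through $tQ_1$ and $tJR$ — not a single segment of the form $[Q_1,-Q_2]$ as you suggest.

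The quantitative core of the lemma is also missing from your plan. One must check that this two-stage split actually closes up, which works precisely because for $a=a(R_{\theta_A})$ as in \eqref{scelta di a} one has $d_1(a)+d_2(a)=2r$; this yields the explicit coefficients \eqref{comb convessa}, and then \eqref{growth} follows by rewriting $(1-\mu_2)(1-\mu_3)$ in the product form \eqref{to show 2} and bounding the denominator $2n+M_2+(t-n)(2-M_2)$ above by $2(n+1)$ and below by $2n$. The lower bound is exactly where the hypothesis $\rho<m$ enters (via $\rho\le M_2/(2-M_2)$); it is not needed, as you claim, to keep $\mu_2,\mu_3\in[0,1]$, which follows from \eqref{stima M} and $\va{t-n}<\rho<1$. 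You flag this bookkeeping as an obstacle but do not resolve it, and without it neither the two-sided bound with $\beta_n$ and $\beta_{n+2}$ nor property (iv) (that $\spt\nu_A$ meets $\step_{n+1}$ only at $(n+1)JR_{\theta_A}$) is established. Your sketch of the angular statement \eqref{arg R bis}, on the other hand, is consistent with the paper's simple geometric argument.
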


\begin{figure}[t!]
\centering   
\def\svgwidth{8.5cm}   
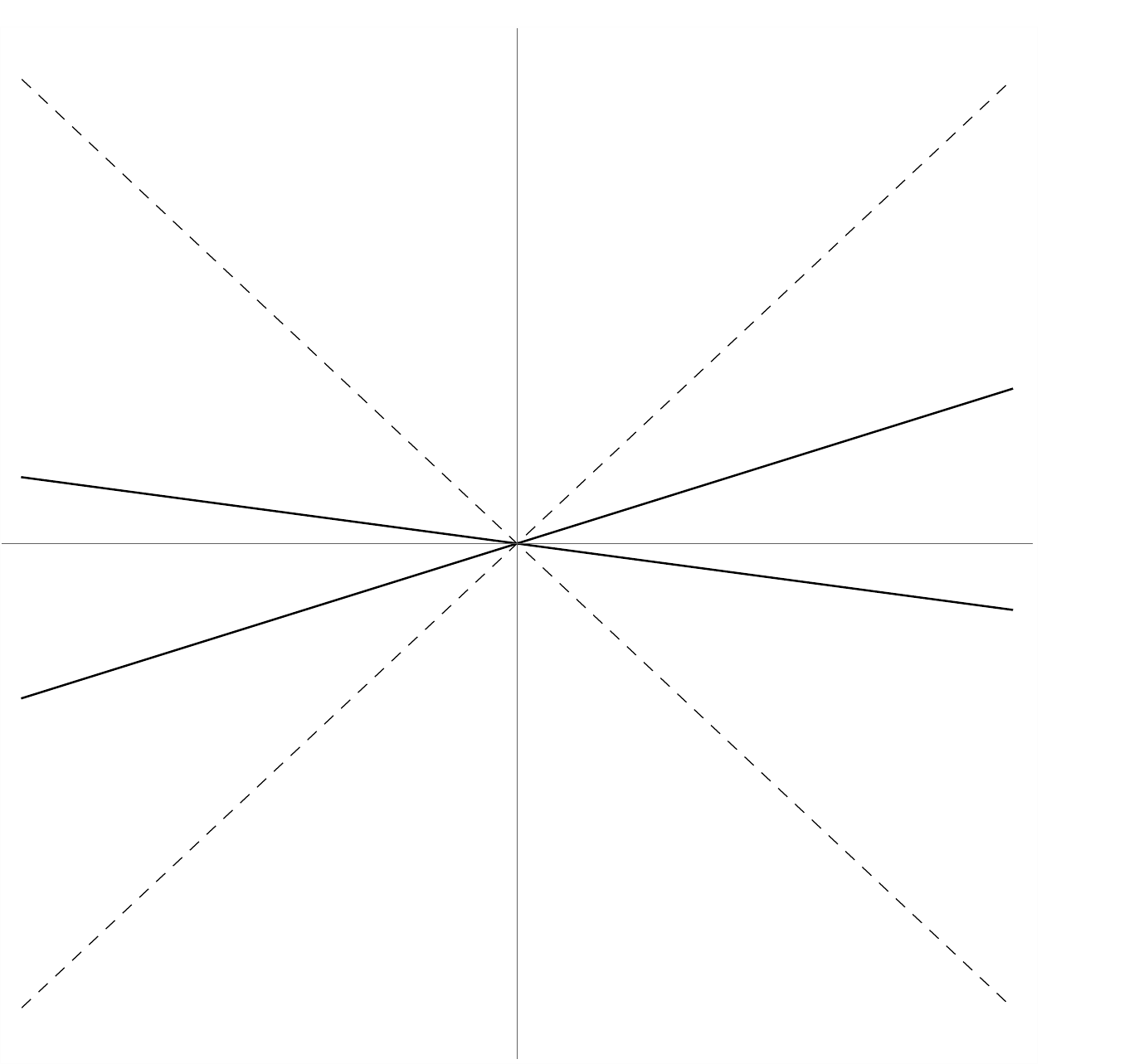  
\caption{Weak staircase laminate.}   
\label{fig:staircase}
\end{figure}

\begin{proof}
Let us start by defining $\nu_A$. 
From Lemma \ref{lemma2} there exist $c_K >1$ and non zero matrices $Q \in \uno$, $P \in E_{\infty}$, such that $\det(P-Q)=0$,
\begin{gather} 
A=\mu_1 Q + (1-\mu_1) P \,, \quad \text{for some } \quad \mu_1 \in [0,1]\,, \label{split} \\ 
\frac{1}{\cK} \va{A} \leq \va{P-Q}, \va{P}, \va{Q} \leq \cK \va{A} \,.  \label{estimates2 bis}
\end{gather}
Moreover $P=tJR$ with $R=R_{\theta_A}=(r,0)$ as in \eqref{theta A} and $t>0$. 
 We will  estimate $t$. 
 By \eqref{dist}, there exists $\tilde{R} \in \rotazioni$ such that $|A-n J \tilde{R} |< \rho$. Applying Lemma \ref{lemma1} to $A-n J \tilde{R}$ and $P - n J \tilde{R}$ yields
\begin{equation} \label{stima segmento}
|P- n J \tilde{R}|< \sqrt{2} \rho \,,	
\end{equation}
since $P- n J \tilde{R} \in E_{\infty}$. Hence from \eqref{stima segmento} we get 
\begin{equation} \label{stima t}
\va{t - n} < \rho	\,,
\end{equation}
since $|JR|=|J \tilde{R}|= \sqrt{2}$. We also have 
\begin{equation}\label{mu1}
\mu_1 = \frac{\va{A-Q}}{\va{P-Q}} \geq 1 - \frac{\va{P-A}}{\va{P-Q}}\geq 1 - \cK \frac{\rho}{n} \,,
\end{equation}
since $\va{P-A}<3 \rho$ and $\va{P-Q} > n/\cK$, by \eqref{dist angolo}, \eqref{estimates2 bis},  \eqref{stima segmento}.

Next we split $P$ in order to  ``climb'' one step of the staircase (see Figure \ref{fig:staircase}). 
Define $x:=\cos \theta_A,y:=\sin \theta_A$ and
\[
a:= \frac{x}{k} + i \, \frac{y}{s} \,,
\]
as in \eqref{scelta di a}. Moreover set
\[
Q_1 := \lambda_1 (a,d_1 (\conj{a})) \,, \quad  Q_2 := \lambda_2 (-a,d_2 (\conj{a})) \,.
\]
Here $\lambda_1,\lambda_2$ are chosen as in \eqref{lambda}, so that $Q_j \in \target_j$ and, by   Lemma \ref{lemma4}, $\det(Q_j - JR)=0$. 
Furthermore, set
\begin{equation} \label{comb convessa}
\left\{
\begin{aligned}
& \mu_2 := \frac{ M_2 - (t - n) M_2 }{2 n +  M_2 + (t - n)(2 - M_2)} \,, \\	
& \mu_3 := \frac{M_1 - (t-n) M_1}{2 (n + 1)} \,,
\end{aligned}
\right.
\end{equation}
with $M_j$ as in \eqref{L h}. With the above choices we have
\begin{equation} \label{staircase}
\left\{
\begin{aligned}
& t J R = \mu_2 t Q_1 + (1-\mu_2) \tilde{P} \,, \\
& \tilde{P}= \mu_3 (n+1) Q_2 + (1-\mu_3) (n+1) JR \,,	
\end{aligned}
\right.
\end{equation}
and $\mu_2,\mu_3 \in [0,1]$ by \eqref{stima M}. 
In order to check \eqref{staircase}, we solve the first equation in $\tilde{P}$ to get
\begin{equation} \label{equiv}
\gamma_2 t JR + (1-\gamma_2) t Q_1 = \gamma_3 (n + 1) Q_2 + (1-\gamma_3)(n + 1)JR \,,
\end{equation}
with $\mu_2 = 1 - 1/\gamma_2$ and $\mu_3 = \gamma_3$. 
Equating the first conformal coordinate of both sides of \eqref{equiv} yields 
\begin{equation} \label{gamma2}
\gamma_2 = 1 + \gamma_3 \, \frac{n + 1}{t} \frac{\lambda_2}{\lambda_1} \,.
\end{equation}
Substituting \eqref{gamma2} in the second component of \eqref{equiv} gives us
\begin{equation} \label{gamma3}
\gamma_3 \left( \lambda_1 + \lambda_2  - \lambda_1 \lambda_2 \left( d_1 (a) + d_2(a)  \right) \, r^{-1} \right) = \frac{1 - (t-n)}{n + 1 } \, \lambda_1 \,.
\end{equation}
By \eqref{scelta di a}, $d_1(a)+d_2(a)=2 r $ and equation \eqref{gamma3} yields
\begin{equation} \label{gamma3 bis}
\gamma_3 = \frac{1 - (t-n)}{n + 1 } \, \frac{\lambda_1}{ \lambda_1+\lambda_2-2 \lambda_1 \lambda_2} = \frac{1 - (t-n)}{2(n + 1) } \, M_1 \,.
\end{equation}
Equations \eqref{gamma2} and \eqref{gamma3 bis} give us \eqref{comb convessa}.
Therefore, by \eqref{split} and \eqref{staircase}, the measure
\[
\nu_A := \mu_1 \delta_Q +  (1-\mu_1) \left( \mu_2 \delta_{t Q_1} + (1-\mu_2) \left( 
\mu_3 \delta_{(n+1) Q_2} + (1-\mu_3) \delta_{(n+1)JR} \right) \right) 
\]
defines a laminate of third order with barycenter $A$, supported in $\uno \cup \due \cup \step_{n +1}$ and such that
$\spt \nu_A \cap \step_{n+1}= \{(n+1)JR\}$ with $R=R_{\theta_A}$. Moreover
\[
\spt \nu_A \subset  \{ \xi \in \matrici \, \colon \, \cK^{-1} n < \va{\xi} < \cK \, n \} \,,
\] 
since $\cK^{-1} n < |Q|<\cK n$ by \eqref{dist},\eqref{estimates2 bis} and 
$$
\cK^{-1} n <|t Q_1|, |(n+1)Q_2|<\cK n
$$ 
by \eqref{stima t}, \eqref{stima Q}. 
Next we prove \eqref{growth} by estimating
\begin{equation} \label{to show}
\nu_A ( \step_{n +1}) = \mu_1 (1-\mu_2)(1-\mu_3) \,.
\end{equation}
Notice that $\nu_A ( \step_{n +1})$ depends on $R$.
For small $\rho$, we have 
\[
\mu_2 = \displaystyle\frac{M_2}{2 n} + \rho \, O \left(\frac{1}{n}	\right)\,,  \quad   \mu_3=\displaystyle \frac{M_1}{2 n} + \rho \, O \left(\frac{1}{n}\right) \,,
\]
so that
\[
(1-\mu_2) (1-\mu_3)=  1 - \frac{M_1+M_2}{2 n} + \rho \, O \left( \frac{1}{n^2} \right) = 1 - \frac{1+l}{n}  + \rho \, O \left( \frac{1}{n^2} \right)\,,
\]
with $l$ as in \eqref{L h}. Although this gives the right asymptotic, we will need to estimate \eqref{to show} for every $n \in \N$. By direct calculation
\[
(1-\mu_2)(1-\mu_3) = \frac{n + (t- n)}{n + 1} \, \frac{2 n + 2 -  M_1 + (t-n)M_1}{2 n +  M_2 + (t-n) (2-M_2)} \,,
\]
so that
\begin{equation} \label{to show 2}
(1-\mu_2)(1-\mu_3) = 
\left(   1   + \frac{t-n}{n}    \right)
\left(   1   - \frac{1}{n + 1}    \right)	
\left(   1   -  \frac{ 2 l \, (1- (t-n)  )     }{2 n + M_2 + (t-n) (2-M_2)}    \right)\,.
\end{equation}
Let us bound \eqref{to show 2} from above. Recall that $t-n < \rho <1$ and 
$2-M_2 >0$, by \eqref{stima M}, so 
the denominator of the third factor in \eqref{to show 2} is bounded from above by $2 (n+1)$ and
\begin{equation} \label{upper1}
\begin{aligned}
	(1-\mu_2)(1-\mu_3) & \leq  
	\left(   1   + \frac{\rho}{n}    \right)
	\left(   1   - \frac{1}{n + 1}    \right)	
\left(   1   -  \frac{l}{n + 1} + l \, \frac{\rho}{n+1} \right) \\
& \leq \left(   1   + \cK \, \frac{\rho}{n}    \right) 
\left(   1   - \frac{1}{n + 1}    \right)
\left(   1   -  \frac{l}{n + 1} \right) \,,
\end{aligned}
\end{equation}
where $\cK >1$ is such that
\[
l \, \frac{\rho}{n + 1} \left( 1+ \frac{\rho}{n}    \right)  \leq (\cK -1) \, \frac{\rho}{n}  \left( 1 - \frac{l}{n + 1} \right) \,.
\]
Moreover
\begin{equation} \label{upper2}
\left(   1   - \frac{1}{n + 1}    \right)
\left(   1   -  \frac{l}{n + 1} \right) = 1 - \frac{1+l}{n+1} +  
\frac{l}{{(n + 1)}^2} \leq  1 - \frac{1+l}{n+2} = \beta_{n+2} (R) \,.
\end{equation}
The upper bound in \eqref{growth} follows from \eqref{upper1} and \eqref{upper2}.

Let us now bound \eqref{to show 2} from below. We can estimate from below  the denominator in the third factor of \eqref{to show 2} with $2n$,
since $t - n > - \rho$ by \eqref{stima t} and the assumption that $\rho < m$ with $m$ as in \eqref{L h}. 
Therefore
\begin{equation} \label{lower1}
\begin{aligned}
	(1-\mu_2)(1-\mu_3) & \geq  
	\left(   1   - \frac{\rho}{n}    \right)
	\left(   1   - \frac{1}{n + 1}    \right)	
\left(   1   -  \frac{l}{n} - l \, \frac{\rho}{n} \right) \\
& \geq \left(   1   - \cK \, \frac{\rho}{n}    \right) 
\left(   1   - \frac{1}{n + 1}    \right)
\left(   1   -  \frac{l }{n } \right)\,,
\end{aligned}
\end{equation}
if we choose $\cK >1$ such that
\[
\left( 1- \frac{\rho}{n}    \right) \, l \leq (\cK -1) \left( 1 - \frac{l}{n } \right) \,.
\]
Finally
\begin{equation} \label{lower2}
\left(   1   -  \frac{1}{n +1} \right)\left(   1   -  \frac{l}{n } \right) \geq 1 - \frac{1+l}{n} = \beta_n (R) \,.
\end{equation}
The lower bound in \eqref{growth} follows from \eqref{lower1} and \eqref{lower2}.

Finally, the last part of the statement follows from a simple geometrical argument, recalling that
$\arg R =\theta_A= - \arg(b-d_1 (\conj{a}))$ and using hypothesis \eqref{dist angolo}.
\end{proof}

\begin{remark} \label{iteration}
By iteratively applying  Lemma \ref{lemma3}, one can obtain, 
for every $R_\theta \in \rotazioni$, 
a sequence of laminates of finite order $\nu_n \in \lam$ that satisfies 
$\conj{\nu}_n=JR_\theta$, $\spt \nu_n  \subset \uno \cup \due \cup \step_{n +1}$, and  
\begin{equation} \label{explosion}
\lim_{n \to \infty} \int_{\matrici} {|\lambda	|}^{p (R_\theta)} \, d \nu_n (\lambda) = \infty \,,
\end{equation}
where $p(R_\theta) \in \left[ \frac{2S}{S+1}, \frac{2K}{K+1} \right]$ is the function defined in \eqref{L h}.      
Indeed,
setting $A=J R_\theta$ and iterating the construction of Lemma \ref{lemma3}, yields 
$\nu_n \in \lam$ such that $\conj{\nu}_n=JR_\theta$ and $\spt \nu_n  \subset \uno \cup \due \cup \step_{n +1}$. Notice that $\nu_n$ contains the term $\prod_{j=1}^n (1-\mu_2^j)(1-\mu_3^j) \delta_{(n+1) J R_{\theta}}$, with $\mu_2^j,\mu_3^j$ as defined in \eqref{comb convessa}.
Therefore, using \eqref{asintotica produttoria} and \eqref{growth} (with $\rho=0$), we obtain
\begin{equation} \label{explosion2}
\prod_{j=1}^n (1-\mu_2^j)(1-\mu_3^j) \approx \prod_{j=1}^n \beta_j (R_\theta) \approx \frac{1}{n^{p(R_\theta)}} 
\end{equation}
which implies \eqref{explosion}. 
\end{remark}

\begin{remark} \label{rmk:differenza}  
In the isotropic case $S=K$, the laminate $\nu_A$ provided by Lemma \ref{lemma3} coincides with the one in \cite[Lemma 3.16]{afs}. In particular, the growth condition \eqref{growth} is independent of the initial point $A$, and it reads as
\[
\left( 1 - \cK \, \frac{\rho}{n}  \right) \beta_n (I) \leq \nu_A (\step_{n +1}) \leq \left( 1 + \cK \, \frac{\rho}{n}  \right) \beta_{n+2} (I) \,, \quad \beta_n (I)=1- \frac{1+k}{n} \,. 
\]
Moreover, by Remark \ref{iteration}, for every $R_\theta \in SO(2)$, $J R_\theta$ is the center of mass of a sequence of laminates of finite order such that \eqref{explosion} holds with $p(R_\theta) \equiv \frac{2K}{K+1}$, which gives the desired growth rate.

In contrast, in the anisotropic case $1<S<K$, the growth rate of the laminates explicitly depends on the argument of the barycenter $J R_\theta$. The desired growth rate corresponds to $\theta = 0$, that is, the center of mass has to be $J$. 

In constructing approximate solutions with the desired integrability properties, it is then crucial to be able to select rotations whose angle 
whose angle lies in an arbitrarily small neighbourhood of $\theta = 0$.
\end{remark}

We now proceed to show the existence of a \textit{piecewise affine} map $f$ that solves the differential inclusion \eqref{differential inclusion} up to an arbitrarily small $L^{\infty}$ error. Such map will have the integrability properties given by \eqref{crescita}.

\begin{proposition} \label{prop:grad}
Let $\Omega \subset \R^2$ be an open bounded domain.
Let $K>1$, $\alpha \in (0,1)$, $\varepsilon>0$, $0<\delta_0 < \frac{2K}{K+1} - \frac{2S}{S+1}$, $\gamma > 0$. There exist a constant $c_{K,\delta_0} > 1 $, depending only on $K,S_1,S_2,\delta_0$, and a 	\textit{piecewise affine} map $f \in W^{1,1} (\Omega;\R^2) \cap C^{\alpha} (\overline{\Omega};\R^2)$, such that
\begin{enumerate}[\indent(i)]
\item	$f (x)= J x$ on $\partial \Omega$, \smallskip
\item  $[f - J x]_{C^{\alpha} (\overline{\Omega})} < \varepsilon$, \smallskip
\item $\dist (\nabla f (x), \target ) < \gamma$ a.e. in $\Omega$. \smallskip
\end{enumerate}
Moreover
\begin{equation} \label{crescita}
\frac{1}{c_{K,\delta_0}} t^{-\frac{2K}{K+1}} < \frac{   | \{ x \in \Omega \, \colon \, |\nabla f (x)|>t  \} | }{|\Omega|} < c_{K,\delta_0} \, t^{-p} \,,
\end{equation}
where $p \in \left(\frac{2K}{K+1}- \delta_0,\frac{2K}{K+1} \right]$. That is, 
$\nabla f \in L^{p}_{\rm weak} (\Omega;\matrici)$ and $\nabla f \notin L^{\frac{2K}{K+1}} (\Omega;\matrici)$.
In particular
$f \in W^{1,q} (\Omega;\R^2)$ for every $q < p$, but $\int_{\Omega} \va{\nabla f (x)}^{\frac{2K}{K+1}} \, dx = \infty$. 
\end{proposition}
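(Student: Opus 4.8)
The plan is to build $f$ by iterating Lemma~\ref{lemma3} to produce a staircase laminate, convert it to a piecewise affine map via Proposition~\ref{gradienti}, and then track the measure of super-level sets to obtain \eqref{crescita}. As explained in Remark~\ref{rmk:differenza}, the decisive difference from the isotropic case \cite{afs} is that the growth rate $\beta_n(R_\theta)$ now depends on the argument $\theta$ of the barycenter $JR_\theta$, and the desired exponent $\frac{2K}{K+1}$ is attained only at $\theta=0$. So the construction must be designed so that, at every stage of the staircase, the rotations $R$ appearing in the new steps $\step_{n+1}$ have arguments confined to a neighbourhood of $0$ that can be taken as small as we like. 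This is exactly what the final clauses of Lemma~\ref{lemma3} (the estimate $|\arg R|=|\theta_A|<\delta+\rho$ under hypothesis~\eqref{dist angolo}, and the conclusion $\spt\nu_A\subset T\cup\step_{n+1}^{\delta+\rho}$) are tailored to provide.

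\textbf{Construction.} First I would fix $\delta_0$ and choose $\delta\in(0,\pi/4)$ small enough that the function $p(R_\theta)$ of \eqref{L h}, which is continuous and equals $\frac{2K}{K+1}$ at $\theta=0$, satisfies $p(R_\theta)>\frac{2K}{K+1}-\delta_0$ for all $|\theta|<2\delta$ (possible by Lemma~\ref{lemma5}); this will pin down the exponent $p:=\min_{|\theta|\le 2\delta}p(R_\theta)\in(\frac{2K}{K+1}-\delta_0,\frac{2K}{K+1}]$. Then I would choose $\rho\in(0,\min\{m,1/2\})$ small (with $m$ as in \eqref{L h}), small enough compared to $\delta$ that the angular spread stays controlled under iteration, and small enough that the multiplicative errors $(1\pm c_K\rho/n)$ in \eqref{growth} are summable-in-log, i.e. $\prod_n(1\pm c_K\rho/n)$ stays comparable to a constant. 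Starting from the affine map $x\mapsto Jx$ (whose gradient $J=(0,1)\in\step_1^\delta$ trivially satisfies the distance hypotheses), I would inductively apply Proposition~\ref{gradienti} to the laminate $\nu_A$ of Lemma~\ref{lemma3} on each subregion where the current gradient is within $\rho$ of some $nJR$ with $|\arg R|<\delta+(n-1)\rho$-type bound, freezing the pieces where the gradient has landed in a small neighbourhood of $\target$ and refining the pieces where it has climbed to $\step_{n+1}$. The approximation parameter in Proposition~\ref{gradienti} at stage $n$ is chosen $\ll\rho$ and summable, so that the accumulated $C^\alpha$ error is $<\varepsilon$, the boundary datum $Jx$ is preserved, and a.e.\ the gradient of the limiting piecewise affine $f$ lies within $\gamma$ of $\target$ (the frozen pieces exhaust $\Omega$ up to null measure because $\nu_A(\step_{n+1})<1$ strictly and in fact $\to$ product of $\beta$'s). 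This gives (i)--(iii).

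\textbf{The growth estimate.} For \eqref{crescita}, write $\Omega_n$ for the part of $\Omega$ where $|\nabla f|$ still lies near $\step_{n+1}$ after $n$ steps; by property (iii) of Proposition~\ref{gradienti} and \eqref{growth}, $|\Omega_{n}|/|\Omega|=\prod_{j=1}^{n}\nu^{(j)}(\step_{j+1})$ up to the bounded multiplicative factors from the $(1\pm c_K\rho/j)$ terms, and each factor is squeezed between $\beta_j(R^{(j)})$ and $\beta_{j+2}(R^{(j)})$ with $|\arg R^{(j)}|<2\delta$. Since on $[-2\delta,2\delta]$ one has $p\le p(R_\theta)\le\frac{2K}{K+1}$, the product telescopes (via \eqref{prod beta}, applied uniformly) to give $c_{K,\delta_0}^{-1}\,(n+1)^{-\frac{2K}{K+1}}\le |\Omega_n|/|\Omega|\le c_{K,\delta_0}\,(n+1)^{-p}$. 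On $\Omega_{n-1}\setminus\Omega_n$ the gradient has modulus comparable to $n$ (by clause (iii) of Lemma~\ref{lemma3}, $c_K^{-1}n<|\xi|<c_Kn$), so for $t\sim n$ the superlevel set $\{|\nabla f|>t\}$ differs from $\Omega_n$ only by sets of comparable measure; summing the geometric-type series $\sum_{m\ge n}|\Omega_{m-1}\setminus\Omega_m|$ on both sides and comparing with $t^{-\frac{2K}{K+1}}$, resp.\ $t^{-p}$, yields \eqref{crescita} after adjusting the constant $c_{K,\delta_0}$. The Cavalieri formula \eqref{cavalieri} then converts the lower bound into $\int_\Omega|\nabla f|^{\frac{2K}{K+1}}=\infty$ and the upper bound into $\nabla f\in L^p_{\rm weak}$, hence $f\in W^{1,q}$ for all $q<p$.

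\textbf{Main obstacle.} The delicate point is the bookkeeping on the angles: one must verify that iterating Lemma~\ref{lemma3} does not let $|\arg R^{(j)}|$ drift out of the window $[-2\delta,2\delta]$, since each application only guarantees $|\theta_A|<\delta+\rho$ \emph{relative} to the hypothesis $\dist(A,\step_n^\delta)<\rho$, and the next stage's "$\delta$" is the previous stage's "$\delta+\rho$". I expect this to be handled by taking $\rho$ small compared to $\delta$ and observing that the increments do not compound additively without bound—in fact the construction of Lemma~\ref{lemma3} places $\spt\nu_A\cap\step_{n+1}$ at the single rotation $R_{\theta_A}$ determined by $A$ itself, so one tracks a single scalar $\theta$-trajectory rather than a spreading family, and a fixed small $\rho$ keeps it inside $(-2\delta,2\delta)$ for all $n\ge 2$. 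A secondary technical nuisance is ensuring the $C^\alpha$ and distance errors from Proposition~\ref{gradienti} can be made summable simultaneously with the measure bookkeeping; this is routine and follows the scheme of \cite[Section 3.2]{afs} and \cite[Theorem A.2]{npp}, which is also what will be invoked in Theorem~\ref{thm finale} to remove the residual $L^\infty$ error.
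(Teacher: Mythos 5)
Your overall architecture (iterate Lemma \ref{lemma3} and Proposition \ref{gradienti}, freeze the pieces that land near $\target$, track the nested sets where the gradient sits near $\step_n$, sandwich their measures by products of $\beta_j$'s, and convert to \eqref{crescita} via the distribution function and \eqref{cavalieri}) is exactly the paper's. The genuine gap is in the point you yourself flag as delicate: you propose to run the whole iteration with a \emph{single fixed} $\rho$, claiming that the angle does not drift because ``one tracks a single scalar $\theta$-trajectory''. Neither half of that claim holds. First, after one application of Proposition \ref{gradienti} the set $\Omega_{n}$ splits into countably many pieces $\Omega_{n,i}$, each with its own affine gradient $A_i$ and hence its own angle $\theta_{A_i}$; the gradients are only within the approximation/distance parameter of the laminate support, so you are tracking a spreading family, not one trajectory. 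Second, Lemma \ref{lemma3} only gives $|\theta_{A}|<\delta+\rho$ when $\dist(A,\step_n^{\delta})<\rho$, i.e.\ the admissible angular window grows by (up to) $\rho$ at every step; even the sharper geometric estimate (a perturbation of size $\rho$ at radius $n$ moves the angle by $O(\rho/n)$) gives a cumulative drift of order $\rho\log n$, which is unbounded. Nothing in the construction pulls $\theta$ back towards $0$, so with constant $\rho$ the angles eventually leave any fixed window $(-2\delta,2\delta)$ and the upper bound with exponent $p>\frac{2K}{K+1}-\delta_0$ is lost.

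There is a second, independent failure of the fixed-$\rho$ scheme: the error factors in \eqref{growth} accumulate as $\prod_{j\le n}\bigl(1\pm c_K\rho/j\bigr)\sim n^{\pm c_K\rho}$, which is \emph{not} comparable to a constant for any fixed $\rho>0$ (you need $\sum_j\rho_j/j<\infty$). This shifts the lower bound to $|\Omega_n|\gtrsim n^{-\frac{2K}{K+1}-c_K\rho}$, and then $\sum_n n^{\frac{2K}{K+1}-1}\,|\Omega_n|\lesssim\sum_n n^{-1-c_K\rho}<\infty$, so the crucial conclusion $\int_\Omega\va{\nabla f}^{\frac{2K}{K+1}}\,dx=\infty$ no longer follows. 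The paper repairs both problems at once by taking a stage-dependent, geometrically decaying sequence $\rho_n<\frac{\delta}{4}2^{-n}$ (with $\rho_1$ also small compared with $m$, $c_K^{-1}$, $\dist(\step_1,\target)$ and $\gamma$): then the angular windows $\delta_{n+1}=\delta_n+\rho_n$ satisfy $\delta_n<\delta/2$ for all $n$, so every rotation stays in $\step_{n}^{\delta}$ and the comparison $\beta_n(R_0)\le\beta_n(R^i)$, $\beta_{n+2}(R^i)\le\beta_{n+2}(R_\delta)$ applies, while $\prod_j(1\pm c_K\rho_j/j)$ converges to finite positive constants; the exponent loss is then controlled solely through the modulus of continuity of $p$ at $\theta=0$ (choosing $\delta$ with $\alpha(\delta)<\delta_0$), which yields \eqref{crescita} with $p=p(R_\delta)\in\bigl(\frac{2K}{K+1}-\delta_0,\frac{2K}{K+1}\bigr]$ and the exact exponent $\frac{2K}{K+1}$ in the lower bound. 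Your summable choice of the Proposition \ref{gradienti} approximation parameters for the $C^\alpha$ bound is fine, but the same summability must be imposed on the distances $\rho_n$ themselves, and that is the step missing from your argument.
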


\begin{proof}
By Lemma \ref{lemma5} the function $p \colon (-\pi,\pi] \to \left[ \frac{2S}{S+1},\frac{2K}{K+1}   \right]$ is uniformly continuous. Let $\alpha \colon [0,\infty] \to [0,\infty]$ be its modulus of continuity.
Fix $0<\delta<\pi/4$ such that
\begin{equation} \label{delta massimo}
\alpha(\delta) < \delta_0 \,. 	
\end{equation}
Let $\{\rho_n\}$ be a strictly decreasing positive sequence satisfying
\begin{equation}\label{rho 1}
\rho_1 < \frac{1}{4} \min \{ m,c_K^{-1}, \dist(\step_1,T),\gamma \} \,, \quad  \rho_n <\frac{\delta}{4} \, 2^{-n} \,, 
\end{equation}
where $m>0$ and $c_K >1$ are the constants from Lemma \ref{lemma3}. Define $\{\delta_n\}$ as
\begin{equation} \label{delta n}
\delta_1 := 0  \, \quad \text{and} \quad  \delta_n := \sum_{j=1}^{n-1} \rho_n   \,\, \text{ for } \, n \geq 2 \,. 	
\end{equation}
In particular from \eqref{rho 1},\eqref{delta n} it follows that
\begin{equation} \label{bound delta n}
\delta_ n < \frac{\delta}{2} \,, \quad \text{ for every } \,\, n \in \N\,.	
\end{equation}

\medskip
\paragraph{\textbf{Step 1.}} Similarly to the proof of 
\cite[Proposition 3.17]{afs}, by repeatedly combining Lemma \ref{lemma3} and Proposition \ref{gradienti},  
we will prove the following statement:

\medskip
\paragraph{\textit{Claim.}}
There exist sequences of piecewise constant functions $\tau_n \colon \Omega \to (0 , \infty)$ and piecewise affine
 Lipschitz mappings $f_n \colon \Omega \to \R^2$, such that
\begin{enumerate}[(a)] 
\item \label{prop a} $f_n(x)=Jx$ on $\partial \Omega$, \smallskip 
\item \label{prop b}$[f_n - Jx]_{C^{\alpha} (\overline{\Omega})}  < (1-2^{-n}) \varepsilon$, \smallskip
\item \label{prop c}$\dist(\nabla f_n(x), T \cup \step_n^{\delta_n}) < \tau_n (x)$ a.e. in $\Omega$,   \smallskip
\item \label{prop d}$\tau_n (x) = \rho_n$   in $\Omega_n$, \smallskip	
\end{enumerate}
where
\[
\Omega_n := \{ x \in \Omega \, \colon \, \dist(\nabla f_n(x), T) \geq \rho_n \} \,.
\]
Moreover
\begin{equation}\label{stima omega n}
\prod_{j=1}^{n-1} \left(  1 - c_K   \frac{\rho_j}{j} \right) \beta_j (R_0) \leq
\frac{|\Omega_n|}{|\Omega|} \leq  \prod_{j=1}^{n-1} \left(  1 + c_K   \frac{\rho_j}{j} \right) \beta_{j+2} (R_{\delta})	 \,.
\end{equation}

\medskip
\paragraph{\textit{Proof of the claim.}}
We proceed by induction. 
Set $f_1(x):=Jx$ and $\tau_1 (x) := \rho_1$ for every $x \in \Omega$. 
Since $J \in \step_1^0$, then $f_1$ satisfies \ref{prop a}-\ref{prop c}. Also, 
$\rho_1 < \dist (T, \step_1)/4$ by \eqref{rho 1}, so $\Omega_1 = \Omega$ and \ref{prop d}, \eqref{stima omega n} follow.

Assume now that $f_n$ and $\tau_n$ satisfy the inductive hypothesis. 
We will first define $f_{n+1}$ by modifying $f_n$ on the set $\Omega_n$. Since $f_n$ is piecewise affine we have a decomposition of $\Omega_n$ into pairwise disjoint open subsets $\Omega_{n,i}$ such that
\begin{equation} \label{unione disgiunta}
\va{ \Omega_n \smallsetminus \bigcup_{i=1}^{\infty} \Omega_{n,i}    } = 0 \,,
\end{equation}
with $f_n (x) = A_i x + b_i$ in $\Omega_{n,i}$, for some $A_i \in \matrici$ and $b_i \in \R^2$. Moreover
\begin{equation} \label{distance}
\dist (A_i, \step_n^{\delta_n}) < \rho_n 
\end{equation}
by (c) and \ref{prop d}.
Since \eqref{distance} and \eqref{rho 1} hold, we can invoke Lemma \ref{lemma3} to obtain
a laminate $\nu_{A_i}$ and a rotation $R^i=R_{\theta_{A_i}}$ satisfying, in particular, $\conj{\nu}_{A_i}=A_{i}$,
\begin{gather}
|\arg R^i|=|\theta_{A_i}| < \delta_{n+1} \label{arg R} \,,  \\ 
 \spt \nu_{A_i} \subset T \cup \step_{n+1}^{\delta_{n+1}} \,,  \label{supp n+1 bis}
\end{gather}
since $\delta_{n+1}= \delta_n + \rho_n$ by \eqref{delta n}. 
By applying Proposition \ref{gradienti} to $\nu_{A_i}$ and by taking into account \eqref{supp n+1 bis}, we obtain a piecewise affine Lipschitz mapping $g_i \colon \Omega_{n,i} \to \R^2$, such that
\begin{enumerate}[resume*]
	\item \label{prop e} $g_i(x)=A_i x + b_i $ on $\partial \Omega_{n,i}$, \smallskip
	\item \label{prop f}$[g_i - f_n]_{C^{\alpha}(\overline{\Omega_{n,i}})} < 2^{-(n+1+i)} \varepsilon$, \smallskip
	\item \label{prop g}$c_K^{-1} n < | \nabla g_i (x)  | < c_K n$ a.e. in $\Omega_{n,i}$, \smallskip 
	\item \label{prop h}$\dist(\nabla g_i(x), T\cup \step_{n+1}^{\delta_{n+1}} )< \rho_{n+1} $ a.e. in $\Omega_{n,i}$.\smallskip
\end{enumerate}
Moreover 
\begin{equation} \label{stima induttiva}
 \left(  1 - c_K   \frac{\rho_n}{n} \right) \beta_n (R^i)  
 \leq \frac{|\omega_{n,i}|}{|\Omega_{n,i}|}   
\leq  \left(  1 + c_K   \frac{\rho_n}{n} \right) \beta_{n+2} (R^i) \,, 
\end{equation}
with
\[
\omega_{n,i}:=
\va{ \left\{  x \in \Omega_{n,i} \, \colon \,   
\dist( \nabla g_i (x), \step_{n+1}^{\delta_{n+1}} )< \rho_{n+1}   \right\}          } \,.
\]
Set
\begin{equation*}
f_{n+1}(x) :=
\begin{cases}
f_n (x)   &   \text{if } x \in \Omega \smallsetminus \Omega_n  \,, \\
g_i(x)    &   \text{if } x \in \Omega_{n,i} \,. 
\end{cases}
\end{equation*}
Since $\Omega_{n+1}$ is well defined, we can also introduce
\[
\tau_{n+1} (x) := 
\begin{cases}
\tau_{n}(x)   &  \text{for } x \in \Omega \smallsetminus \Omega_{n+1}\,, \\
\rho_{n+1}         &   \text{for } x \in \Omega_{n+1} \,,	
\end{cases}
\]
so that \ref{prop d} holds.
From \ref{prop e} we have $f_{n+1}(x)=Jx$ on $\partial \Omega$. From \ref{prop f} we get
$[f_{n+1} -f_n]_{C^{\alpha}(\overline{\Omega})}< 2^{-(n+1)}\varepsilon$ so that \ref{prop b} follows.
\ref{prop c} is a direct consequence of \ref{prop d}, \ref{prop h}, and the fact that $\rho_n$ is strictly decreasing.
Finally let us prove \eqref{stima omega n}. 
First notice that the sets $\omega_{n,i}$ are pairwise disjoint. By \eqref{rho 1}, in particular we have 
$\rho_{n+1} < \dist (T, \step_1)/4$, so that 
\begin{equation} \label{unione disgiunta 2}
\va{ \Omega_{n+1} \smallsetminus \bigcup_{i=1}^{\infty} \omega_{n,i}    } = 0 \,.
\end{equation} 
By \eqref{arg R} and \eqref{bound delta n} we have  
$|\arg R^i |< \delta$. Then by the properties of $\beta_n$ (see Lemma \ref{lemma5}), 
\begin{equation} \label{stima beta}
\beta_{n}(R^i) \geq \beta_{n}(R_0)    \quad \text{and} \quad \beta_{n+2}(R^i) \leq \beta_{n+2}(R_{\delta}) \,.
\end{equation}
Using \eqref{stima beta}, \eqref{unione disgiunta}, \eqref{unione disgiunta 2} in \eqref{stima omega n} yields
\[
|\Omega_n| \left(  1 - c_K   \frac{\rho_n}{n} \right) \beta_j (R_0)  
 \leq|\Omega_{n+1}|   
\leq  |\Omega_n| \left(  1 + c_K   \frac{\rho_n}{n} \right) \beta_{j+2} (R_\delta) \,,
\]
and \eqref{stima omega n} follows.

\medskip
\paragraph{\textbf{Step 2.}} 
Notice that on $\Omega \smallsetminus \Omega_n$ we have that $\nabla f_{n+1} = \nabla f_n $ almost everywhere, so  
$\Omega_{n+1} \subset \Omega_n$. Therefore $\{f_n\}$ is obtained by modification on a nested sequence of open sets, satisfying 
\[
\prod_{j=1}^{n-1} \left(  1 - c_K   \frac{\rho_j}{j} \right) \beta_j (R_0) \leq
\frac{|\Omega_n|}{|\Omega|} \leq  \prod_{j=1}^{n-1} \left(  1 + c_K   \frac{\rho_j}{j} \right) \beta_{j+2} (R_{\delta})	 \,.
\]
By \eqref{rho 1} we have $\rho_n < \min \{ 2^{-n} \, \delta, c_K^{-1}\} /4$, so that
\[
\prod_{j=1}^{\infty} \left(  1 - c_K   \frac{\rho_j}{j} \right) = c_1  \,, \quad 
 \prod_{j=1}^{\infty} \left(  1 + c_K   \frac{\rho_j}{j} \right) = c_2  \,,	
\]
with $0<c_1<c_2< \infty$, depending only on $K,S_1,S_2,\delta$ (and hence from $\delta_0$, by \eqref{delta massimo}). Moreover, from Lemma \ref{lemma5},
\[
\prod_{j=1}^n \beta_j (R_\theta)= n^{-p (R_\theta)} +O \left( \frac{1}{n} \right)\,,    \quad \text{ uniformly in } \quad (-\pi,\pi] \,.
\]
Therefore, there exists a constant $c_{K,\delta_0} >1$ depending only on $K,S_1,S_2,\delta_0$, such that
\begin{equation} \label{stima omega n 2}
\frac{1}{c_{K,\delta_0} }\, n^{-\frac{2K}{K+1}} \leq |\Omega_n| \leq 	c_{K,\delta_0} \, n^{-p_{\delta_0}} \,, 
\end{equation}
since $p(R_0) = \displaystyle \frac{2K}{K+1}$. Here $p_{\delta_0}:=p(R_\delta)$. Notice that, by \eqref{delta massimo}, $p_{\delta_0} \in \left(\frac{2K}{K+1}- \delta_0,\frac{2K}{K+1}\right]$, since $p$ is strictly decreasing in $[0,\pi/2]$.

From \eqref{stima omega n 2}, in particular we deduce $|\Omega_n| \to 0$. Therefore $f_n \to f$ almost everywhere
in $\Omega$, with $f$ piecewise affine. Furthermore $f$ satisfies (i)-(iii) by construction.

We are left to estimate the distribution function of $\nabla f$. By \ref{prop g} we have that
\[
|\nabla f(x)|> \frac{n}{c_{K,\delta_0}} \quad \text{in} \quad \Omega_n \qquad \text{and} \qquad  |\nabla f(x)|< c_{K,\delta_0} \, n \quad \text{in} \quad \Omega \smallsetminus \Omega_n \,.
\]
For a fixed $t > c_{K,\delta_0}$, let $n_1 :=[c_{K,\delta_0} t]$ and $n_2:=[c_{K,\delta_0}^{-1}t]$, where $[\cdot]$ denotes the integer part function. Therefore
\[
\Omega_{n_1+1} \subset \{ x \in \Omega \, \colon \, |\nabla f (x)|>t \}  \subset \Omega_{n_2}
\]
and \eqref{crescita} follows from \eqref{stima omega n 2}, with $p=p_{\delta_0}$. Lastly, \eqref{crescita} implies that $\nabla f_n$ is uniformly 
bounded in $L^1$, so that $f \in W^{1,1}(\Omega;\R^2)$ by dominated convergence.
\end{proof}

We remark that the constant $c_{K,\delta_0}$ in \eqref{crescita} is monotonically increasing as a function of $\delta_0$, that is $c_{K,\delta_1} \leq c_{K,\delta_2}$ if $\delta_1 \leq \delta_2$.

We now proceed with the construction of exact solutions to \eqref{differential inclusion}. We will follow a standard argument 
(see, e.g., \cite[Remark 6.3]{f}, \cite[Thoerem A.2]{npp}).



\begin{theorem} \label{thm finale}
Let $\sigma_1,\sigma_2 $ be defined by \eqref{speciali} for some $K,S_1, S_2$ as in \eqref{K bounds} and $S$ as in \eqref{def S}.   
 There exist 
coefficients $\sigma_n \in L^{\infty}(\Omega;\{ \sigma_1, \sigma_2 \})$, 
exponents $p_n \in \left[\frac{2S}{S+1},\frac{2K}{K+1} \right]$, functions $u_n \in W^{1,1} (\Omega;\R)$,
such that 
\begin{align} 
\label{pde2}
&\begin{cases}
\Div (\sigma_n (x) \nabla u_n (x)) = 0  &  \text{ in } \quad \Omega \,, \\
u_n (x) = x_1							& \text{ on }  \quad \partial \Omega \,,
\end{cases}\\
&\nabla u_n \in L^{p_n}_{\rm weak}(\Omega;\R^2), \quad p_n \to \frac{2K}{K+1}, \label{thesis1}\\
&\nabla u_n \notin L^{\frac{2K}{K+1}}(\Omega;\R^2). \label{thesis2}
\end{align}
In particular $u_n \in W^{1,q} (\Omega;\R)$ for every $q < p_n$, but $\int_{\Omega} {|\nabla u_n|}^{\frac{2K}{K+1}} \, dx= \infty$.
\end{theorem}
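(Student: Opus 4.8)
The plan is to upgrade the \emph{approximate} solutions of the differential inclusion \eqref{differential inclusion} produced in Proposition \ref{prop:grad} to \emph{exact} ones, keeping the integrability \eqref{crescita}, by running the classical de‑approximation scheme (the same argument as in \cite[Theorem A.2]{npp} and \cite[Remark 6.3]{f}). First I would reduce the statement to the claim that \emph{for every fixed $\delta_0\in(0,\frac{2K}{K+1}-\frac{2S}{S+1})$ and every $\varepsilon,\gamma>0$ there is $F=(u,v)\in W^{1,1}(\Omega;\R^2)\cap C^\alpha(\overline\Omega;\R^2)$ with $\nabla F\in\target$ a.e. in $\Omega$, $F(x)=Jx$ on $\partial\Omega$, $[F-Jx]_{C^\alpha(\overline\Omega)}<\varepsilon$, and $\nabla F\in L^{p_{\delta_0}}_{\rm weak}(\Omega;\matrici)\setminus L^{\frac{2K}{K+1}}(\Omega;\matrici)$}, with $p_{\delta_0}=p(R_\delta)\in(\frac{2K}{K+1}-\delta_0,\frac{2K}{K+1}]$ as in Proposition \ref{prop:grad}. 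Granting this, one picks $\delta_0=\delta_{0,n}\downarrow 0$, sets $p_n:=p_{\delta_{0,n}}$ (so $p_n\to p(R_0)=\frac{2K}{K+1}$ and $p_n\in[\frac{2S}{S+1},\frac{2K}{K+1}]$ by Lemma \ref{lemma5}), takes $u_n$ to be the first component of the corresponding $F_n$ (then $u_n=x_1$ on $\partial\Omega$ since $Jx=(x_1,-x_2)$, and $u_n$ solves the PDE in \eqref{pde2} by the equivalence with \eqref{differential inclusion}), and defines $\sigma_n:=\sigma_1$ on $\{\nabla F_n\in\uno\}$ and $\sigma_n:=\sigma_2$ elsewhere; this $\sigma_n$ is measurable because $\uno,\due$ are closed, takes values in $\{\sigma_1,\sigma_2\}$ a.e.\ since $\uno\cap\due=\{0\}$, and since $\nabla u_n$ has the same integrability as $\nabla F_n$ the conclusions \eqref{thesis1}--\eqref{thesis2} follow.

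For the claim I would argue by iteration, mimicking Steps 1--2 of the proof of Proposition \ref{prop:grad} but with the target set now fixed. Start from $f_1$ given by Proposition \ref{prop:grad} with the prescribed $\delta_0$, Hölder error $\varepsilon/2$ and error $\gamma_1$; it is piecewise affine, $f_1=Jx$ on $\partial\Omega$, $\dist(\nabla f_1,\target)<\gamma_1$ a.e., and satisfies \eqref{crescita} with exponent $p_{\delta_0}$ and constant $c_1=c_{K,\delta_0}$. Inductively, given a piecewise affine $f_k$ with $f_k=Jx$ on $\partial\Omega$, $[f_k-Jx]_{C^\alpha}<(1-2^{-k})\varepsilon$, $\dist(\nabla f_k,\target)<\gamma_k$ ($\gamma_k\downarrow 0$) and $\frac{1}{c_k}t^{-\frac{2K}{K+1}}<|\{|\nabla f_k|>t\}|/|\Omega|<c_k\,t^{-p_{\delta_0}}$ with $c_k$ bounded, one modifies $f_k$ only on the open set $\{x:\dist(\nabla f_k(x),\target)\ge\gamma_{k+1}\}$: on each of its affine pieces $\Omega_{k,i}$, where $\nabla f_k\equiv A_i$ with $\dist(A_i,\target)<\gamma_k$, one applies Proposition \ref{gradienti} to a finite‑order laminate $\nu_{A_i}\in\lam$ with $\overline\nu_{A_i}=A_i$, $\dist(\spt\nu_{A_i},\target)<\tfrac12\gamma_{k+1}$ and $\spt\nu_{A_i}\subset\{\xi:|A_i|-\cK\gamma_k<|\xi|<|A_i|+\cK\gamma_k\}$, obtaining a piecewise affine $g_i$ agreeing with $f_k$ on $\partial\Omega_{k,i}$, with $[g_i-f_k]_{C^\alpha(\overline{\Omega_{k,i}})}<2^{-(k+i+1)}\varepsilon$, $\dist(\nabla g_i,\target)<\gamma_{k+1}$ a.e.\ and $\nabla g_i$ obeying the same two‑sided norm bound; gluing the $g_i$'s to $f_k$ on the complement yields $f_{k+1}$. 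Since only the ``bad'' set is altered and the altered gradients satisfy $\dist(\cdot,\target)<\gamma_{k+1}$, the bad sets are nested, so for a.e.\ $x$ the sequence $f_k$ is eventually unchanged near $x$; thus $f_k\to F$ in $C^\alpha$ with $\nabla F=\nabla f_k$ near $x$ for $k$ large, whence $F=Jx$ on $\partial\Omega$, $[F-Jx]_{C^\alpha}<\varepsilon$, and $\nabla F\in\target$ a.e.

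Two quantitative points need care, the first being the main obstacle. The first is the \emph{local corrector lemma}: given $A$ with $\dist(A,\target)<\eta$, $\eta$ small, one must produce a finite‑order laminate with barycenter $A$, supported within $\tfrac12\eta$ of $\target$ and inside $B(Q,\cK\eta)$ for some $Q\in\target$ with $|A-Q|\le(1+\sqrt2K)\eta$ (Lemma \ref{lemma1}). I would obtain it by writing $A-Q$ as a combination along rank‑one directions joining $\uno$ and $\due$ in a neighbourhood of $Q$ — the quantitative anisotropic analogue, for the target \eqref{conf targets}, of the splittings in Lemmas \ref{lemma2} and \ref{lemma4}, and precisely the place where the conformal invariance \eqref{conformal invariance} exploited in \cite{afs} is unavailable; the required norm bound $\cK^{-1}|A|<|\xi|<\cK|A|$ on $\spt\nu$ is then automatic since $\spt\nu\subset B(Q,\cK\eta)$ and $|Q|\asymp|A|$. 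The second point is the \emph{stability of \eqref{crescita}}: each correction changes $|\nabla f_k|$ only within an additive error $\cK\gamma_k$, so $\{|\nabla f_{k+1}|>t\}$ and $\{|\nabla f_k|>t\}$ differ inside the layer $\{\,t-\cK\gamma_k<|\nabla f_k|<t+\cK\gamma_k\,\}$, whose measure can be made $\le 2^{-k}$ times the two‑sided bound by choosing each $\gamma_{k+1}$ small enough in terms of the (fixed) map $f_k$; then $c_k$ stays bounded, and combined with the a.e.\ convergence $\nabla f_k\to\nabla F$ the two‑sided estimate \eqref{crescita} passes to $F$ with exponents $p_{\delta_0}$ and $\frac{2K}{K+1}$. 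The remaining assertions ($F\in W^{1,1}$ and $u\notin W^{1,\frac{2K}{K+1}}$) are then immediate from $L^{p_{\delta_0}}_{\rm weak}\subset L^1$ and \eqref{cavalieri}.
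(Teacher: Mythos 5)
Your overall strategy diverges from the paper's at the crucial point: you try to upgrade the approximate solutions of the differential inclusion \eqref{differential inclusion} to \emph{exact} ones, and everything hinges on the ``local corrector lemma'' you yourself flag as the main obstacle. That lemma is false as you state it. Fix $A$ with $\dist(A,\target)=\eta'\in(\tfrac12\eta,\eta)$ and $|A|$ large compared with $\eta$ (this is exactly the situation on your bad sets, where $|\nabla f_k|\sim n$ is huge while $\gamma_k$ is tiny). Since $\uno$ and $\due$ are linear subspaces with $\uno\cap\due=\{0\}$, we have $\dist(Q,\due)\geq c_K^{-1}|Q|$ for $Q\in\uno$, so for $\eta\ll|A|$ the ball $B(Q,\cK\eta)$ you want to confine $\spt\nu$ to does not meet any neighbourhood of $\due$; hence ``$\spt\nu$ within $\tfrac12\eta$ of $\target$ and inside $B(Q,\cK\eta)$'' forces $\spt\nu$ to lie within $\tfrac12\eta$ of the single plane $\uno$. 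But the distance to a linear subspace is controlled by the orthogonal projection onto its complement, which is linear, so the barycenter of any such measure satisfies $\dist(\conj{\nu},\uno)\leq\sup_{X\in\spt\nu}\dist(X,\uno)\leq\tfrac12\eta<\eta'=\dist(A,\uno)$, contradicting $\conj{\nu}=A$. In other words, you cannot strictly decrease the distance to one of the target planes by laminating inside a small ball: any genuine improvement must use rank-one connections between $\uno$ and $\due$, i.e.\ excursions of size comparable to $|A|$, and those large moves are precisely what would wreck the two-sided distribution estimate you are trying to propagate. So the inductive step of your Step-1 iteration fails, and the stability argument for \eqref{crescita} built on ``$|\nabla f_k|$ changes only by an additive $\cK\gamma_k$'' collapses with it. (Exact solutions of the inclusion with the optimal lower exponent do exist in special cases, e.g.\ the degenerate case $s=0$ treated via \cite{afs}, but producing them in general requires a substantially different and more delicate construction than the one you sketch.)

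The paper avoids this difficulty entirely: it never solves the inclusion exactly. Writing the approximate inclusion of Proposition \ref{prop:grad} in Euclidean coordinates as $\nabla f_n^1=E_n+a_n$, $\nabla f_n^2=\rot\,\sigma_n E_n+b_n$ with $a_n,b_n\to0$ in $L^\infty$ and $\sigma_n$ defined according to which target set $\nabla f_n$ is close to, it corrects only the scalar component: $u_n:=f_n^1+v_n$, where $v_n\in H^1_0(\Omega)$ solves the \emph{linear} equation $\Div(\sigma_n\nabla v_n)=-\Div(\sigma_n a_n-\rot^T b_n)$. Then $\Div(\sigma_n\nabla u_n)=\Div(\rot^T\nabla f_n^2)=0$ holds exactly, the boundary condition $u_n=x_1$ is preserved, and since $v_n$ is bounded in $H^1$ while $p_n\leq\frac{2K}{K+1}<2$, the $L^2$ corrector neither spoils $\nabla u_n\in L^{p_n}_{\rm weak}$ nor restores $L^{\frac{2K}{K+1}}$ integrability. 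Your reduction in the first paragraph (choice of $\delta_{0,n}\downarrow0$, $p_n=p_{\delta_{0,n}}\to\frac{2K}{K+1}$, definition of $\sigma_n$) is fine, but to complete the proof you should replace your exact-inclusion claim by this PDE-level correction.
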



\begin{proof}
By Proposition \ref{prop:grad} there exist sequences $f_n \in W^{1,1} (\Omega;\R^2) \cap C^{\alpha} (\overline{\Omega};\R^2)$, $\gamma_n \searrow 0$, $p_n \in \left[\frac{2S}{S+1},\frac{2K}{K+1} \right]$, 
such that, $f_n (x)=J x$ on $\partial \Omega$,
\begin{gather} 
\dist (\nabla f_n (x) , \uno \cup \due ) < \gamma_n  \quad \text{a.e. in} \quad \Omega \,,  \label{thm:inclusion} \\ 
	\nabla f_n \in L^{p_n}_{\rm weak} (\Omega;\matrici) \,, \quad  p_n \to \frac{2K}{K+1} \,, \quad \nabla f_n \notin L^{\frac{2K}{K+1}}(\Omega;\matrici) \,.    \label{thm:reg}
\end{gather}
In euclidean coordinates, condition \eqref{thm:inclusion} implies that
\begin{equation} \label{thm:inclusion 2}
\left(
\begin{matrix}
\nabla f_n^1 (x) \\
\nabla f_n^2 (x) \\	
\end{matrix}
\right)  =
\left(
\begin{matrix}
E_n	(x) \\
\rot \sigma_n (x) E_n (x) \\
\end{matrix}
\right) +
\left(
\begin{matrix}
a_n (x)	\\
b_n (x) \\
\end{matrix}
\right) 
 \quad  \text{a.e. in} \quad \Omega
 \end{equation}
with $f_n=(f_n^1,f_n^2)$, $\sigma_n := \sigma_1\chi_{\{\nabla f \in \uno\}}  +\sigma_2\chi_{\{\nabla f \in \due\}}$, $E_n \colon \Omega \to \R^2$, $\rot=\left(
\begin{matrix}
0 & -1 \\
1    &   0 \\	
\end{matrix} \right)$ and
\begin{equation} \label{coefficienti a zero}
a_n , b_n \to 0 \qquad \text{in} \qquad L^{\infty}(\Omega;\R^2) \,.	
\end{equation} 
The boundary condition $f_n = J x$ reads $f^1_n = x_1$ and $f_n^2=-x_2$.
We set $u_n := f_n^1 + v_n$, where 
$v_n \in H^1_0 (\Omega,\R)$ is the unique solution to
\[
\Div (\sigma_n \nabla v) = - \Div (\sigma_n a_n - \rot^T b_n) \,.
\]
Notice that $v_n$ is uniformly bounded in $H^1$ by \eqref{coefficienti a zero}.
Since \eqref{thm:inclusion 2} holds, it is immediate to check that $\Div (\sigma_n \nabla u_n)= \Div (\rot^T \nabla f_n^2)=0$, so that $u_n$ is a solution of 
\eqref{pde2}. Finally, the regularity thesis \eqref{thesis1}, \eqref{thesis2}, follows from the definition of $u_n$ and the fact that $v_n \in H_0^1(\Omega;\R)$ and $f_n^1$ satisfies \eqref{thm:reg} with $1<p_n<2$.
\end{proof}

\nocite{*}
\bibliography{bibliografia}

\bibliographystyle{plain}

\end{document}